\newtheorem{thm}{Theorem}[section]
\newtheorem{lem}[thm]{Lemma}
\newtheorem{prop}[thm]{Proposition}
\theoremstyle{definition}
\newtheorem{defn}[thm]{Definition}
\newtheorem{conj}[thm]{Conjecture}
\theoremstyle{remark}
\newtheorem{rem}[thm]{Remark}
\numberwithin{equation}{section}
\theoremstyle{definition}
\title{The equivariant coarse Baum--Connes conjecture for metric spaces with proper group actions}
\author{Jintao Deng \and Benyin Fu \and Qin Wang}
\thanks{Supported in part by NSFC (No. 11871342, 11771143, 11831006, 12171156). }
\date{}
\begin{document}

\maketitle
%\onehalfspacing

\begin{abstract}
The equivariant coarse Baum--Connes conjecture interpolates between the Baum--Connes conjecture for a discrete group and the coarse Baum--Connes conjecture for a proper metric space. In this paper, we study this conjecture under certain assumptions. More precisely, assume that a countable discrete group $\Gamma$ acts properly and isometrically on a discrete metric space $X$ with bounded geometry,  not necessarily cocompact. We show that if the quotient space $X/\Gamma$ admits a coarse embedding into Hilbert space and $\Gamma$ is amenable, and that the $\Gamma$-orbits in $X$ are uniformly equivariantly coarsely equivalent to each other, then the equivariant coarse Baum--Connes conjecture holds for $(X, \Gamma)$. Along the way, we prove a $K$-theoretic amenability statement for the $\Gamma$-space $X$ under the same assumptions as above, namely,  the canonical quotient map from the maximal equivariant  Roe algebra of $X$ to the reduced equivariant Roe algebra of $X$ induces an isomorphism on $K$-theory.

\end{abstract}

\section{Introduction}

The Baum--Connes conjecture \cite{BC, BC94} provides an algorithm to compute the $K$-theory of reduced group $C^*$-algebras, which has  important applications in geometry, topology and analysis (see \cite{BC} for a survey). It has been verified for a large class of groups including a-T-menable groups \cite{HK} and hyperbolic groups \cite{Lafforgue}.

The coarse Baum--Connes conjecture \cite{Roe93, Ro} is a geometric analogue of the Baum--Connes conjecture, which also has significant applications in geometry and topology, such as the Novikov conjecture and Gromov's conjecture about Riemannian metric of positive scalar curvature (\cite{Ferry-W, HR, Yu06, Yu19}). Many results (c.f. \cite{CWY, Deng, Fukaya, GongWangYu, Sherry, KY06, KY, OyonoYu, SW, Ruffus1, Ruffus2}) have been achieved in recent years after Yu's breakthrough to the coarse Baum--Connes conjecture for metric spaces which are coarsely embeddable into Hilbert space \cite{Yu}.

Let $X$ be a proper metric space with bounded geometry, and let $\Gamma$ be a countable discrete group. Assume that $\Gamma$ acts on $X$ properly and isometrically, not necessarily cocompact. In this case, we call $X$ a {\it $\Gamma$-space}. There is an equivariant higher index map (\cite{Fu, Shan})
$${\rm Ind}^\Gamma:\lim\limits_{d\rightarrow \infty}K_*^{\Gamma}(P_d(X)) \to K_*(C^*(X)^{\Gamma}),$$
where
$K_*^{\Gamma}(P_d(X))$ is the $\Gamma$-equivariant $K$-homology group of the Rips complex $P_d(X)$ of $X$ on scale $d>0$, and $K_*(C^*(X)^{\Gamma})$ is the $K$-theory group of the equivariant Roe algebra of the $\Gamma$-space $X$. The equivariant coarse Baum--Connes conjecture states that the equivariant higher index map ${\rm Ind}^\Gamma$ above is an isomorphism provided that $X$ has bounded geometry. When the $\Gamma$-action is cocompact, $C^*(X)^\Gamma$ is Morita equivalent to the reduced group $C^*$-algebra of $\Gamma$, so that  the equivariant coarse Baum--Connes conjecture is a reformulation of  the Baum--Connes conjecture (cf. \cite{WY-book}). When the group is trivial, the equivariant coarse Baum--Connes conjecture is plainly the coarse Baum--Connes conjecture.

In \cite{Shan}, Shan proved that  the equivariant higher index map is injective when $X$ is a simply connected complete Riemannian manifold with non-positive sectional curvature and $\Gamma$ is a torsion free group acting on $X$ properly and isometrically.  In \cite{Fu}, Fu and Wang showed that the equivariant coarse Baum--Connes conjecture holds for a $\Gamma$-space $X$ with bounded geometry which admits an equivariant coarse embedding into Hilbert space. In \cite{FWY}, Fu, Wang and Yu proved that if a discrete group $\Gamma$ acts properly and isometrically on a space $X$ with bounded geometry, such that both $X/\Gamma$ and $\Gamma$ admit a coarse embedding into Hilbert space, and that the action has bounded distortion,  then the equivariant higher index map is injective for the $\Gamma$-space $X$.
\par
In a recent work \cite{AT}, Arzhantseva and Tessera answer in the negative the following well-known question \cite{DG, GK}: does coarse embeddability into Hilbert space is preserved under group extensions of finitely generated groups? Their constructions also provide the first example of a finitely generated group which does not coarsely embed into Hilbert space yet does not contain any weakly embedded expander, answering in the affirmative another open problem \cite{AT,NY}.
Their examples are a certain restricted permutational wreath products
$$\mathbb{Z}_2\wr_G H:=\left( \bigoplus_G \mathbb{Z}_2\right)\rtimes H$$
and
$$\mathbb{Z}_2\wr_G (H\times \mathbb{F}_n):=\left( \bigoplus_G \mathbb{Z}_2\right)\rtimes \Big(H\times \mathbb{F}_n\Big),$$
where $G$ is a {\em Gromov monster group}, i.e., a finitely generated group which contains an isometrically embedded expander in its Cayley group, $H$ is a {\em Haagerup monster group}, i.e., a finitely generated group with the Haagerup property but without Yu's property A, and $\mathbb{F}_n$ is a finitely generated free group. Note that both examples are {\em ``abelian-by-Haagerup"} group extensions. The reason why they do not coarsely embed into Hilbert relies on the fact that both groups contain a certain {\em relative expanders} \cite{AT}. If we take
$X=\mathbb{Z}_2\wr_G H$ or $\mathbb{Z}_2\wr_G (H\times \mathbb{F}_n)$, and $\Gamma=\bigoplus_G \mathbb{Z}_2$, then we come across with a situation that $\Gamma$ is a torsion group, the $\Gamma$-action on $X$ does not have bounded distortion, and that the space $X$ does not even  coarsely embed into Hilbert space, let alone admits a $\Gamma$-equivariant coarse embedding into Hilbert space, namely, a situation which does not satisfy the assumptions in each of the main results in \cite{Shan, Fu, FWY} mentioned above.
\par
In this paper, as motivated by these examples by Arzhantseva and Tessera, we shall further investigate the equivariant coarse Baum--Connes conjecture along this line.
The following is our main result.

\begin{thm}\label{main-thm}
Let $\Gamma$ be a countable discrete group acting properly and isometrically on a discrete metric space $X$ with bounded geometry, not necessarily co-compact. Assume that all $\Gamma$-orbits in $X$ are uniformly equivariantly coarsely equivalent. If the quotient space $X/\Gamma$ admits a coarse embedding into Hilbert space and $\Gamma$ is amenable, then the equivariant higher index map
$${\rm Ind}^\Gamma:\lim\limits_{d\rightarrow \infty}K_*^{\Gamma}(P_d(X)) \to K_*(C^*(X)^{\Gamma})$$
is an isomorphism.
\end{thm}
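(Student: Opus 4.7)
The plan is to adapt the Dirac--dual-Dirac strategy to the equivariant coarse setting, with the orbit structure of $X$ over $X/\Gamma$ playing the organising role. The first move is to establish the $K$-theoretic amenability statement highlighted in the abstract: the quotient map $C^*_{\max}(X)^\Gamma \to C^*(X)^\Gamma$ from the maximal to the reduced equivariant Roe algebra should induce an isomorphism on $K$-theory. Because all $\Gamma$-orbits are uniformly equivariantly coarsely equivalent, the equivariant Roe algebra restricted to a cocompact region is essentially built from a stabilisation of a crossed product by $\Gamma$, whose maximal and reduced completions coincide by amenability. A Mayer--Vietoris argument over a bounded exhaustion of $X/\Gamma$ (which, recall, coarsely embeds into Hilbert space) then promotes this local comparison into a global $K$-theoretic equivalence.

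With this reduction in hand one can work on the maximal side, where full functoriality is available. Following the equivariant version of Yu's machinery, I would introduce a twisted maximal equivariant Roe algebra with coefficients in a $C^*$-algebra built from the Hilbert space $H$ into which $X/\Gamma$ coarsely embeds. The novelty here is that the embedding lives on the quotient rather than on $X$; to turn it into Bott data one needs an approximately $\Gamma$-equivariant lift $X \to H$. The uniform equivariant coarse equivalence of orbits allows one to compare Hilbert-valued functions across fibres, and amenability of $\Gamma$ permits a F{\o}lner averaging procedure along orbits to render the lift invariant up to error controlled at each coarse scale. This supplies the Bott/dual-Dirac transformation at the level of equivariant Roe algebras.

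The third step sets up the Bott map $\beta_*$ and the evaluation map $e_*$ between the $K$-theory of the twisted and untwisted maximal equivariant Roe algebras on each Rips complex $P_d(X)$. The identity $e_* \circ \beta_* = \mathrm{id}$ should follow from a standard rotation homotopy once the Bott element is in place. The deeper statement that $e_*$ itself is an isomorphism should be proved by decomposing the twisted algebra along the finite-dimensional affine subspaces exhausting $H$, applying Mayer--Vietoris on these pieces, and invoking infinite-dimensional Bott periodicity. Passing to the direct limit as $d\to\infty$ and combining with the first step yields the desired isomorphism for $\mathrm{Ind}^\Gamma$.

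The hard part will be the transition from the non-equivariant embedding of $X/\Gamma$ to a genuinely $\Gamma$-equivariant analytic object on $X$: neither $H$ carries a natural $\Gamma$-action, nor does $X\to X/\Gamma$ admit an equivariant section in general, so the lift must be built rather than inherited. The combined use of amenability of $\Gamma$ (for F{\o}lner averaging) and uniform equivariant coarse equivalence of orbits (for comparing averages across fibres) is exactly what produces approximate equivariance with controlled propagation. Coordinating this lift with the Rips filtration, with the Bott element, and with the max-to-reduced comparison of the first step simultaneously, so that the resulting diagram really does compute $\mathrm{Ind}^\Gamma$, will be the most delicate part of the argument.
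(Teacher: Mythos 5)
Your outline reproduces the general shape of the argument (twisted Roe algebras, Bott--Dirac asymptotic morphisms, Mayer--Vietoris), but the step you single out as the hard part --- manufacturing an approximately $\Gamma$-equivariant coarse embedding $X\to H$ by F\o lner averaging --- is not the right problem and cannot be solved: in the motivating examples ($X=\mathbb{Z}_2\wr_G H$, $\Gamma=\bigoplus_G\mathbb{Z}_2$) the space $X$ admits no coarse embedding into Hilbert space at all, so no averaging can produce one; moreover, averaging $\xi(\pi(\gamma x))$ over a F\o lner set just returns the constant-on-orbits map $x\mapsto\xi(\pi(x))$, which collapses orbits and is no coarse embedding of $X$. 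The paper instead twists \emph{along the quotient}: the coefficient of $T(x,y)$ lies in $\mathcal{A}(W_N(\pi(x)))$ and is supported near $\xi(\pi(x))$, so $\Gamma$-invariance is simply $T(\gamma x,\gamma y)=T(x,y)$ with no action on the coefficients, and the embedding only has to resolve the transverse direction. The idea missing from your proposal is how the orbit direction is then resolved: after a finite colouring of $X/\Gamma$ making the balls $B(\xi(\pi(x)),r)$ disjoint within each colour, the twisted algebra decomposes as a uniform product, over a $\Gamma$-domain, of algebras supported on neighbourhoods $N_S(\Gamma\cdot x)$ of single orbits; the hypothesis that the orbits are uniformly equivariantly coarsely equivalent identifies each factor with $|\Gamma|$; and the twisted evaluation isomorphism reduces to the Baum--Connes conjecture with coefficients for $\Gamma$, which holds since $\Gamma$ is amenable (Higson--Kasparov). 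This reduction is precisely where both hypotheses enter, and it does not appear in your sketch.

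Two further points. First, your opening step is circular as stated: the $K$-theoretic equivalence of $C^*_{\max}(X)^\Gamma$ and $C^*(X)^\Gamma$ is Theorem 1.2 of the paper and is \emph{derived from} the same twisted/Bott-periodicity machinery, not a prerequisite for it; the untwisted Roe algebra does not localise over a bounded exhaustion of $X/\Gamma$ (finite-propagation operators spread across any such exhaustion), so the proposed Mayer--Vietoris promotion has nothing to start from. What is true --- and proved via Schur multipliers coming from finitely supported positive definite functions on the amenable group $\Gamma$ --- is that max equals reduced as an honest $C^*$-isomorphism for the \emph{twisted} algebras; and for Theorem 1.1 even this is unnecessary, since the proof runs entirely on the reduced side. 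Second, the identity furnished by the rotation homotopy is $\alpha_*\circ\beta_*=\mathrm{id}$ (Dirac after Bott), not $e_*\circ\beta_*$; the evaluation map $e_*$ goes from the localization algebra to the Roe algebra and must be kept distinct from the Dirac map for the final diagram chase to force the untwisted $e_*$ to be an isomorphism.
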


\par
When the group $\Gamma$ is trivial, Theorem 1.1 recovers Yu's famous result in \cite{Yu}. On the other hand, when the $\Gamma$-action is cocompact, Theorem \ref{main-thm} recovers the amenable group case of the celebrated result of Higson and Kasparov in \cite{HK},  where they proved the theorem for more general a-T-menable groups. Note that the examples $X=\mathbb{Z}_2\wr_G H$ or $\mathbb{Z}_2\wr_G (H\times \mathbb{F}_n)$, and $\Gamma=\bigoplus_G \mathbb{Z}_2$ by Arzhantseva and Tessera discussed above provide us with nontrivial examples of non-cocompact group actions satisfying the assumptions in Theorem 1.1. At this point, it is natural to expect that Theorem \ref{main-thm} be true for non-cocompact actions with $\Gamma$ being a-T-menable. However, technically in our approach to the main theorem, we have to establish an isomorphism between the maximal equivariant twisted Roe algebra and the reduced equivariant twisted Roe algebra, see Proposition \ref{iso-max-red-twist}, in which the amenability of $\Gamma$ plays an essential role. Nevertheless, once we get this isomorphism on the $C^*$-algebra level, we are able to further get a $K$-theoretic amenability statement for the $\Gamma$-space $X$,  in a way as the proof of Theorem \ref{main-thm} by using the idea of an infinite dimensional geometric Bott periodicity, see also \cite{Yu}. More precisely, we have the following result:

\begin{thm}\label{main-thm-2}
With the same assumptions as Theorem \ref{main-thm}, the map $$\lambda_*:K_*(C^*_{max}(X)^{\Gamma})\rightarrow K_*(C^*(X)^\Gamma)$$ is an isomorphism, where $\lambda_*$ is the homomorphism induced by the canonical quotient map $\lambda$ from the maximal equivariant  Roe algebra $C^*_{max}(X)^{\Gamma}$ to the reduced equivariant Roe algebra $C^*(X)^{\Gamma}$.
\end{thm}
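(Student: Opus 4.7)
The approach is to mirror the Bott-periodicity proof of Theorem~\ref{main-thm} on the maximal side and then collapse the resulting diagram using Proposition~\ref{iso-max-red-twist}. Concretely, I would introduce maximal analogues of the equivariant twisted Roe and localization algebras used in the proof of Theorem~\ref{main-thm}. Let $\mathcal{A}$ denote the coefficient $C^*$-algebra built from the coarse embedding of $X/\Gamma$ into Hilbert space, concretely a direct limit of Bott--Clifford algebras over finite-dimensional affine subspaces of the ambient Hilbert space. Form both the reduced twisted equivariant Roe algebra $C^*(P_d(X),\mathcal{A})^\Gamma$ and its maximal analogue $C^*_{max}(P_d(X),\mathcal{A})^\Gamma$. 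The canonical quotient maps then fit into the commutative square
\[
\begin{tikzcd}[column sep=large]
\lim_d K_*(C^*_{max}(P_d(X))^\Gamma) \arrow[r,"\beta_{max}"] \arrow[d,"\lambda_*"'] & \lim_d K_*(C^*_{max}(P_d(X),\mathcal{A})^\Gamma) \arrow[d,"\cong"] \\
\lim_d K_*(C^*(P_d(X))^\Gamma) \arrow[r,"\beta"'] & \lim_d K_*(C^*(P_d(X),\mathcal{A})^\Gamma)
\end{tikzcd}
\]
whose horizontal maps are Bott maps; the right vertical arrow is an isomorphism by Proposition~\ref{iso-max-red-twist}, and the bottom arrow $\beta$ is shown to be an isomorphism in the course of proving Theorem~\ref{main-thm}.

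Next, I would repeat on the maximal side the infinite-dimensional Bott-periodicity argument of Theorem~\ref{main-thm}---a Mayer--Vietoris cutting-and-pasting of Hilbert space combined with a finite-dimensional Bott--Dirac isomorphism---to conclude that $\beta_{max}$ is also an isomorphism. A diagram chase then yields that the left vertical arrow is an isomorphism. Finally, since $X$ is coarsely dense in each $P_d(X)$, there is a canonical Morita equivalence between the corresponding equivariant Roe algebras (at both the maximal and reduced levels) that intertwines the quotient map $\lambda$; passing to the limit in $d$ identifies the left vertical arrow with $\lambda_*\colon K_*(C^*_{max}(X)^\Gamma)\to K_*(C^*(X)^\Gamma)$, completing the argument.

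The main obstacle will be verifying that the infinite-dimensional Bott-periodicity step passes to the maximal setting. The delicate point is the preservation of short exact sequences under the maximal completion: the ideals arising from the Mayer--Vietoris decomposition of Hilbert space must fit into six-term exact sequences on the maximal level, and the finite-dimensional Bott--Dirac isomorphism must be re-established for the maximal twisted Roe algebra. Because the relevant ideals are hereditary and are generated by compatible families of finite-dimensional pieces, such preservation is expected but requires careful bookkeeping. Importantly, the amenability of $\Gamma$ is not needed at this stage: it has done its work already in Proposition~\ref{iso-max-red-twist}, after which the remainder of the proof is purely formal.
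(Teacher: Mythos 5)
Your general strategy is right---set up maximal twisted Roe algebras, invoke Proposition~\ref{iso-max-red-twist} to get an isomorphism at the twisted level, and transport this back via Bott--Dirac maps---but the diagram you propose has a genuine gap. You build a $2\times 2$ square with only the Bott maps $\beta$ and $\beta_{max}$ as horizontals, and the diagram chase you envision requires both of these to be isomorphisms. You assert that ``the bottom arrow $\beta$ is shown to be an isomorphism in the course of proving Theorem~\ref{main-thm},'' but this is not what the paper establishes. What is proved there (Proposition~\ref{identity-bott-dirac}) is only that $\alpha_*\circ\beta_*=\mathrm{id}$, i.e.\ that the Dirac map is a left inverse to the Bott map on $K$-theory. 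That makes $\beta_*$ a split injection but says nothing about surjectivity, and neither the paper nor the references it cites claim $\beta_*$ is an isomorphism. The same issue arises a fortiori for $\beta_{max}$, where your proposed Mayer--Vietoris-over-Hilbert-space argument runs straight into the exactness problems you flag for maximal completions; there is no reason to expect that obstacle to be surmountable.

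The paper avoids all of this by using a $3\times 2$ diagram that includes the Dirac maps $\alpha$ and $\alpha_{max}$ as the second level. In that larger diagram the relevant inputs are: the middle horizontal $\lambda_*$ on twisted algebras is an isomorphism (Proposition~\ref{iso-max-red-twist}), and the vertical compositions $\alpha_*\circ\beta_*$ and $\alpha_{max,*}\circ\beta_{max,*}$ are identities (Proposition~\ref{identity-bott-dirac}). An elementary chase then gives injectivity of the top $\lambda_*$ (apply $\beta$, push into the middle, use injectivity there, then apply $\alpha$) and surjectivity (push $y$ down via $\beta_*$, lift across the middle, apply $\alpha_{max,*}$ and use commutativity of the bottom square). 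This never requires $\beta$ or $\beta_{max}$ to be surjective and never uses exactness properties of the maximal completion beyond what is already in Proposition~\ref{iso-max-red-twist}. You should replace your $2\times 2$ square by the paper's $3\times 2$ diagram and use the ``split identity'' compositions rather than attempting to prove the Bott maps are isomorphisms outright. Your observation that amenability is used only in Proposition~\ref{iso-max-red-twist} and that the remainder is formal is correct, and your reduction from $P_d(X)$ to $X$ via coarse equivalence at the end is also fine.
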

\par
When the group $\Gamma$ is trivial, Theorem 1.2 recovers the main result in \cite{SW} by \v{S}pakula and Willett.  When the group action is co-compact, the statement of Theorem 1.2 actually holds for a more general class of groups (see \cite{Cun}), including all a-T-menable groups, as proved by Higson and Kasparov in the same paper \cite{HK}. Unfortunately, due to limitation of our approach, the amenability of the group $\Gamma$ is necessary in our proof to Theorem \ref{main-thm-2}. For the general case where the group action is non-cocompact,  we do not know whether the conclusion of Theorem \ref{main-thm-2} is true or not for $\Gamma$ being a-T-menable.
\par
The paper is organized as follows. In Section 2, we recall the reduced and maximal equivariant Roe algebras, and formulate the equivariant coarse Baum--Connes conjecture, for a discrete metric space with bounded geometry which admits a proper and isometric action, not necessarily co-compact, by a countable discrete group. In Section 3, we use the coarse embedding of the quotient space $X/\Gamma$ into a Hilbert space to define the reduced and maximal equivariant twisted Roe algebras, and their localization counterparts. We prove that the $K$-theory of the equivariant twisted localization algebra is isomorphic to the $K$-theory of the equivariant twisted Roe algebra. Moreover, we also prove that the maximal equivariant  twisted Roe algebra and the reduced equivariant twisted Roe algebra are isomorphic. In Section 4, we complete the proof of the main theorems by using a geometric analogue of Higson--Kasparov--Trout's infinite dimensional Bott periodicity.

\section{The equivariant coarse Baum--Connes conjecture}\label{Sect-def-CBC}
In this section, we shall first recall several notions from coarse geometry, and then define the reduced and maximal equivariant Roe algebras, so as to formulate the equivariant coarse Baum--Connes conjecture for a discrete metric space with bounded geometry which admits a proper and isometric action, not necessarily co-compact, by a countable discrete group.

\subsection{Roe algebras}
In this subsection, we shall recall the notions of the (maximal) equivariant Roe algebras.

Suppose that $Z$ is a proper metric space in the sense that every closed ball of $Z$ is compact. Let $\Gamma$ be a countable discrete group acting on $Z$ properly and isometrically. Recall that a $\Gamma$-action on $Z$ is $proper$ if the map
$$Z\times \Gamma\rightarrow Z\times Z, \qquad (x,\gamma)\mapsto (\gamma x,x)$$
is proper, i.e., the preimage of a compact set is compact. The group $\Gamma$ acts on $Z$ {\it isometrically} if $d(x,y)=d(\gamma x,\gamma y)$ for all $\gamma\in\Gamma$ and $x,y\in Z$. In this case, we call $Z$ a {\it $\Gamma$-space}.

Recall that a $Z$-$module$ is a separable Hilbert space $H$ equipped with a faithful and non-degenerate $*$-representation $\phi:C_0(Z)\rightarrow B(H)$ whose range contains no nonzero compact operators, where $C_0(Z)$ is
the algebra of all complex-valued continuous functions on $Z$ vanishing at infinity.

Let $Z$ be a $\Gamma$-space. we define a $\Gamma$-action on $C_0(Z)$ by
$$\gamma(f)(x)=f(\gamma^{-1}x)$$
for all $\gamma \in \Gamma$ and $f\in C_0(Z)$.

\begin{defn}
Let $H$ be an $Z$-module. We say that $H$ is a {\it covariant $Z$-module} if it is equipped with a
unitary action $\rho$ of $\Gamma$, i.e., $\rho:\Gamma\rightarrow \mathcal {U}(H)$ is a group
homomorphism from $\Gamma$ to the set of unitary elements in $B(H)$, compatible with the action
of $\Gamma$ on $Z$ in the sense that for $v\in H$, $T\in B(H)$ and $\gamma\in \Gamma$,
$$\gamma(T)(v)=\rho(\gamma)T\rho(\gamma)^*(v).$$
We call such a triple $(C_0(Z), \Gamma, \phi)$ a $covariant\ system$.
\end{defn}

\begin{defn}[\cite{KY}]
\label{d:ad_rep}
A covariant system $(C_0(Z), \Gamma, \phi)$ is $admissible$ if there exists a $\Gamma$-Hilbert space $H_Z$ and a separable and infinite-dimensional $\Gamma$-Hilbert space $H'$ such that
\begin{enumerate}
\item[(1)] $H$ is isomorphic to $H_Z\otimes H'$;
\item[(2)] $\varphi=\varphi_0\otimes I$ for some $\Gamma$-equivariant $*$-homomorphism $\varphi_0$ from $C_0(Z)$ to $B(H_Z)$ such that $\varphi_0(f)$ is not in $K(H_Z)$ for any nonzero function $f\in C_0(Z)$ and $\varphi_0$ is non-degenerate in the sense that $\{\varphi_0(f)H_Z:f\in C_0(Z)\}$ is dense in $H_Z$;
\item[(3)] for each finite subgroup $F\subset \Gamma$ and every $F$-invariant Borel subset $E$ of $Z$, there exists a trivial $F$-representation $H_E$ such that $$\chi_E H'\cong l^2(F)\otimes H_E,$$
where $\ell^2(F)$ is endowed with the left regular representation of $F$.
\end{enumerate}

\end{defn}

\begin{defn}
Let $(C_0(Z),\Gamma, \phi)$ be an admissible covariant system.
\begin{enumerate}
\item[(1)] The $support$ $\text{Supp}(T)$ of a bounded linear operator $T:H \to H$ is the complement of the set of points $(x,y)\in Z \times Z$ for which there exist $f$ and $f'$ in $C_0(Z)$ such that
$$\phi(f')T\phi(f)=0,\quad f(x)\neq 0,\quad f'(y) \neq 0.$$
\item[(2)] A bounded operator $T:H\rightarrow H$ has finite propagation if $$\mbox{propagation}(T):=\text{sup}\{d(x,y):(x,y)\in \text{Supp}(T)\}<\infty.$$
This number is called the {\it propagation} of $T$.
\item[(3)] A bounded operator $T:H\rightarrow H$ is {\it locally compact} if the operators $\phi(f)T$ and $T\phi(f)$ are compact for all $f\in C_0(Z)$.
\item[(4)] A bounded operator $T:H\rightarrow H$ is {\it $\Gamma$-invariant} if $\gamma(T)=T$ for all $\gamma\in \Gamma$.
\end{enumerate}
\end{defn}

Now we are ready to define the equivariant Roe algebra.
\begin{defn}
Let $(C_0(Z),\Gamma, \phi)$ be an admissible covariant system. The {\it algebraic equivariant Roe algebra} $\mathbb{C}[Z]^{\Gamma}$ is the $*$-subalgebra of $B(H)$ consisting of all locally compact and $\Gamma$-invariant operators with finite propagation, where $B(H)$ is the algebra of all bounded operators on $H$. The {\it equivariant Roe algebra} $C^*(Z)^{\Gamma}$ is the closure of the $*$-algebra $\mathbb{C}[Z]^{\Gamma}$ under the operator norm in $B(H)$.
\end{defn}

A proper $\Gamma$-space $Z$ is said to have {\it bounded geometry} if there exists a $\Gamma$-invariant subset $X \subset Z$ such that
\begin{enumerate}[(1)]
    \item $X$ is discrete and $\#B_R(x)<\infty$ for all $r>0$, where $\#B_R(x)$ is the number of the elements in $\{z\in X: d(x,z)\leq R\}$;
    \item $X$ is a $c$-net in $Z$ for some $c>0$ in the sense that $Z=N_c(X)=\{z\in Z: d(z, X)\leq c\}$.
\end{enumerate}

The $*$-algebra $\mathbb{C}[Z]^\Gamma$ can be equipped with a $C^*$-norm associated with any $*$-representation $\phi:\mathbb{C}[Z]^\Gamma \to H_{\phi}$. To define the maximal norm $\mathbb{C}[Z]^\Gamma$, we need he following result which is essentially proved in \cite{GongWangYu}.

\begin{lem}\label{lem-maximal-norm}
Let $Z$ be a proper $\Gamma$-space with bounded geometry. For any $T\in \mathbb{C}[Z]^\Gamma$, there exists a constant $C_T>0$ such that for any $*$-representation $\phi$ of $\mathbb{C}[Z]^\Gamma$ on a Hilbert space $\mathcal{H}_{\phi}$, we have $$\|\phi(T)\|_{B(\mathcal{H}_{\phi})}\leq C_T.$$
\end{lem}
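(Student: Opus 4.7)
The strategy is to exploit bounded geometry via an equivariant edge-coloring argument: one decomposes $T$ into a finite sum of $\Gamma$-invariant operators with simple partial-bijection block structure, each of whose image under an arbitrary $*$-representation can be controlled uniformly. Concretely, fix $T \in \mathbb{C}[Z]^\Gamma$ of propagation $R$, let $X \subset Z$ be the $c$-net witnessing bounded geometry, and take a $\Gamma$-equivariant Borel partition $\{E_x\}_{x \in X}$ of $Z$ with each $E_x$ contained in the closed $c$-ball around $x$ (obtained by defining the partition on a fundamental domain and extending equivariantly). The corresponding projections $P_x \in \phi_0(C_0(Z))''$ yield a block decomposition $T = \sum_{(x,y) \in E} P_y T P_x$, where $E := \{(x,y) \in X \times X : d(x,y) \le R + 2c\}$ is a $\Gamma$-invariant symmetric relation of uniformly bounded degree $N := N_{R+2c}$.

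A $\Gamma$-equivariant greedy edge-coloring --- possible because stabilizers are finite by properness and the degree is uniformly bounded --- produces a partition $E = C_1 \sqcup \cdots \sqcup C_L$ into $\Gamma$-invariant partial bijections (no vertex appears as left or right coordinate more than once within a single $C_i$), with $L$ depending only on $N$ and the maximum stabilizer cardinality. Setting $T_i := \sum_{(x,y) \in C_i} P_y T P_x$, each $T_i$ is $\Gamma$-invariant, locally compact, and has propagation at most $R + 2c$, hence $T_i \in \mathbb{C}[Z]^\Gamma$ with $T = T_1 + \cdots + T_L$.

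For each $i$, the partial-bijection structure of $C_i$ forces $T_i^* T_i$ to be a $\Gamma$-invariant, positive, propagation-zero element of $\mathbb{C}[Z]^\Gamma$ with operator norm at most $\|T\|^2$. Such propagation-zero elements lie in a $*$-subalgebra that --- invoking admissibility condition (3) to handle finite stabilizers uniformly --- is $*$-isomorphic to an AF-type algebra, on which every $*$-representation is contractive for its canonical $C^*$-norm. Hence $\|\phi(T_i^* T_i)\| \le \|T\|^2$, so $\|\phi(T_i)\| \le \|T\|$, and summing yields $\|\phi(T)\| \le L \cdot \|T\| =: C_T$, uniformly in $\phi$.

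The main obstacle is executing the two structural steps --- the $\Gamma$-equivariant edge-coloring, and the identification of the propagation-zero $*$-subalgebra as AF-type --- uniformly in the presence of finite stabilizers; here bounded geometry and condition (3) of the admissibility definition play the essential roles, the former to make the combinatorial counts finite and the latter to ensure that the fibrewise representation splits in a $\Gamma$-equivariantly uniform way across orbits with non-trivial stabilizers.
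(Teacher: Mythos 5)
The paper does not prove this lemma; it cites \cite{GongWangYu} where the analogous statement is established for residually finite groups acting freely on themselves. So there is no in-paper argument to compare against, and the proposal must be judged on its own terms. The overall skeleton you chose is the standard and correct one: reduce to partial translations whose squares lie in the propagation-zero subalgebra, observe that the propagation-zero $\Gamma$-invariant locally compact operators form a $C^*$-algebra (it is closed in operator norm, since norm limits preserve propagation zero, $\Gamma$-invariance and local compactness), and use automatic contractivity of $*$-representations of $C^*$-algebras to get $\|\phi(T_i)\|\le\|T\|$ for each summand.

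The genuine gap is in the $\Gamma$-equivariant coloring step, and it is not a matter of care but of an actual obstruction. A $\Gamma$-invariant subset of $E$ is a union of full $\Gamma$-orbits of edges, and the $\Gamma$-orbit of an edge $(x_0,y_0)$ is a partial bijection if and only if $\mathrm{Stab}(x_0)=\mathrm{Stab}(y_0)$: if $\gamma\in\mathrm{Stab}(x_0)\setminus\mathrm{Stab}(y_0)$ then $(x_0,y_0)$ and $(x_0,\gamma y_0)$ both lie in the orbit with the same left coordinate, and any $\Gamma$-invariant refinement must contain both or neither. So when stabilizers differ along an edge (the generic situation for proper non-free actions), no $\Gamma$-invariant coloring of $E$ into partial bijections exists, no matter how many colors one allows. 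Your phrase ``with $L$ depending only on $N$ and the maximum stabilizer cardinality'' flags a second problem: bounded geometry plus properness does \emph{not} give a uniform bound on $|\mathrm{Stab}(x)|$ over $x\in X$ (consider $X=\bigsqcup_n\Gamma/F_n$ with $\Gamma$-invariant metric and $F_n$ an increasing chain of finite subgroups; this is proper, has bounded geometry, and has unbounded stabilizers). Finally, you gesture at admissibility condition (3) as the device that rescues the argument ``across orbits with non-trivial stabilizers,'' but the proposal never exhibits the mechanism: to get anywhere one must actually exploit the local $\ell^2(F)$-regular-representation splitting of the module to refine the decomposition at the Hilbert-space level rather than at the level of the index set $X$, and that refinement is precisely what your metric-space coloring cannot supply. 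As written, the reduction to the propagation-zero $C^*$-algebra is therefore not achieved, and the proof does not close.
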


It follows from the above lemma that the maximal norm on the $*$-algebra $\mathbb{C}[Z]^\Gamma$ is well-defined.

\begin{defn}[\cite{GongWangYu}]
Let $Z$ be a proper $\Gamma$-space. The maximal equivariant Roe algebra, denoted by $C^*_{max}(Z)^{\Gamma}$, is the completion of $\mathbb{C}[Z]^\Gamma$ with respect to the $C^*$-norm
$$\|T\|_{max}:=\sup\{\|\phi(T)\|_{B(\mathcal{H}_{\phi})},\ \phi:\mathbb{C}[Z]^\Gamma\rightarrow B(\mathcal{H}_{\phi})\ \text{is a}\ \text{$\ast$-representation}\}.$$
\end{defn}

We remark that the definition of the (maximal) equivariant Roe algebra does not depends on the choice of admissible covariant systems. The (maximal) equivariant Roe algebra $C^*(Z)^{\Gamma}$ is Morita equivalent to the (maximal) reduced group $C^*$-algebra $C_r^*(\Gamma)$ when the quotient space $Z/\Gamma$ is compact, i.e., $Z$ admits a proper and cocompact $\Gamma$-action(c.f. {\cite[Lemma~5.14]{Ro}}). Moreover, Roe algebras is invariant under the coarse equivalence in the sense of the following.
%Gromov~\cite{Gro} introduced the following notion of coarse embedding between metric spaces.
\begin{defn}\label{sec:1:def:1}
Let $Z$ and $Z'$ be proper $\Gamma$-spaces. A Borel map $f:Z\rightarrow Z'$ is said to be an \emph{equivariantly coarse embedding} if there exist non-decreasing functions $\rho_+, \rho_-: \mathbb{R}_+:=[0,+\infty) \to \mathbb{R}_+$ such that
\begin{enumerate}[(1)]
\item $\lim\limits_{r\rightarrow +\infty}\rho_{\pm}(r)\rightarrow +\infty$;
\item $\rho_-(d(x,y))\leq d(f(x),f(y))\leq \rho_+(d(x,y))$, for all $x, y\in Z$;
\item $f(\gamma x)=\gamma f(x)$ for all $x\in Z$ and $\gamma\in \Gamma$.
\end{enumerate}
\end{defn}
Moreover, the $\Gamma$-spaces $Z$ and $Z'$ are called {\it equivariantly coarsely  equivalent} if there is a equivariant coarse embedding $f: Z\to Z'$ such that the image $f(Z)$ is \emph{coarsely dense} in $Z'$ in the sense that $Z'=N_S(f(Z))$ for some $S>0$, where $N_S(f(Z))=\{y\in Y: d(y,f(Z))<S\}$. We say that a family of metric spaces $\{Z_i\}_{i\in I}$ are said to be \emph{uniformly equivariantly coarsely equivalent} if any two metric spaces $Z_i$ and $Z_j$ are equivariantly coarsely equivalent with the same functions $\rho_+, \rho_-$ and constant $S>0$ in the above definitions.

Let $H$ be a separable infinite-dimensional Hilbert space. A map $f: Z \to H$ is called a coarse embedding into Hilbert space if
if there exist non-decreasing functions $\rho_+, \rho_-: \mathbb{R}_+:=[0,+\infty) \to \mathbb{R}_+$ such that
\begin{enumerate}[(1)]
\item $\lim\limits_{r\rightarrow +\infty}\rho_{\pm}(r)\rightarrow +\infty$;
\item $\rho_-(d(x,y))\leq d(f(x),f(y))\leq \rho_+(d(x,y))$, for all $x, y\in Z$.
\end{enumerate}

The equivariant Roe algebras are invariant under equivariantly coarse equivalence.
\begin{prop}
Let $Z$ and $Z'$ be proper $\Gamma$-spaces. If there exists an equivariantly coarse equivalence $f: Z \to Z'$, then
$$C^*(Z)^{\Gamma}\cong C^*(Z')^{\Gamma},$$
and
$$C_{max}^*(Z)^{\Gamma}\cong C_{max}^*(Z')^{\Gamma}.$$

\end{prop}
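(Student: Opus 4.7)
My plan is to follow the standard ample module / covering isometry strategy, adapted to the equivariant setting. Fix admissible covariant systems $(C_0(Z),\Gamma,\phi)$ on $H_Z$ and $(C_0(Z'),\Gamma,\phi')$ on $H_{Z'}$. Because the (maximal) equivariant Roe algebra of a proper $\Gamma$-space is independent of the choice of admissible covariant system (up to canonical $*$-isomorphism, compatible with both the reduced and the universal $C^*$-norm), it suffices to produce a single $\Gamma$-equivariant unitary $V : H_Z \to H_{Z'}$ whose support lies within a uniformly bounded neighborhood of the graph of $f$, and then to verify that $\mathrm{Ad}_V$ restricts to a $*$-isomorphism $\mathbb{C}[Z]^{\Gamma} \to \mathbb{C}[Z']^{\Gamma}$.

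To construct $V$, choose a Borel equivariant coarse inverse $g:Z'\to Z$ with $\sup_{z'} d(fg(z'),z') < \infty$ and $\sup_{z} d(gf(z),z) < \infty$. Fix a countable $\Gamma$-invariant coarsely dense Borel set $D\subset Z$, partition $Z$ into $\Gamma$-invariant Borel slices $\{W_z\}_{z\in D}$ of uniformly bounded diameter with $z\in W_z$, and partition $Z'$ into $\Gamma$-invariant Borel slices $\{W'_z\}_{z\in D}$ in such a way that $W'_z$ lies within a fixed bounded neighborhood of $f(W_z)$. For each orbit type, indexed by a finite stabilizer $F$, admissibility condition (3) of Definition \ref{d:ad_rep} forces both $\phi(\chi_{W_z})H_Z$ and $\phi'(\chi_{W'_z})H_{Z'}$ to decompose as $\ell^2(F)\otimes K$ with $F$ acting trivially on an infinite-dimensional factor, so one may choose $\Gamma$-equivariant unitaries between corresponding slices and assemble them into the required $V$.

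Given $T\in\mathbb{C}[Z]^{\Gamma}$, a direct support computation shows $\mathrm{Ad}_V(T) = VTV^*$ has propagation at most $\mathrm{prop}(T) + 2\,\mathrm{prop}(V)$, where $\mathrm{prop}(V)$ is finite by the coarse control on the slices $W_z,W'_z$ and the functions $\rho_{\pm}$ of $f$. It is $\Gamma$-invariant by equivariance of $V$, and it is locally compact because $V\phi(h)V^*$ differs from $\phi'(h\circ g)$ by a locally compact operator for each $h\in C_c(Z)$. A symmetric construction using $V^*$ produces the inverse at the algebraic level. The extension to $C^*(Z)^{\Gamma}\cong C^*(Z')^{\Gamma}$ is automatic since $\mathrm{Ad}_V$ is isometric in operator norm; the extension to the maximal completions follows from the universal property, as every $*$-representation of $\mathbb{C}[Z']^{\Gamma}$ pulls back via $\mathrm{Ad}_V$ to a $*$-representation of $\mathbb{C}[Z]^{\Gamma}$, yielding $\|\mathrm{Ad}_V(T)\|_{\max} = \|T\|_{\max}$.

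The main technical obstacle is the construction of the equivariant covering unitary $V$: the matching of slices must be carried out orbit-type by orbit-type so that the finite isotropy subgroups act compatibly. Condition (3) of admissibility is designed precisely to make this possible, after which the rest of the argument is a routine bookkeeping exercise in propagation and local compactness.
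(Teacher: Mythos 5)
Your approach --- constructing a $\Gamma$-equivariant covering unitary $V$ whose support tracks the graph of $f$ and transporting the algebraic Roe algebra by $\mathrm{Ad}_V$ --- is the standard one, and the paper gives no proof of this proposition, so there is nothing to compare against beyond the folklore argument you are reproducing. Your outline of the extension to both completions (isometry in the reduced norm; pullback of representations giving equality of maximal norms) is correct.

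There is, however, one statement that as written would make the construction impossible and must be corrected. You propose to ``partition $Z$ into $\Gamma$-invariant Borel slices $\{W_z\}_{z\in D}$ of uniformly bounded diameter.'' For a proper $\Gamma$-action on a proper metric space, a $\Gamma$-invariant set of bounded diameter contains whole orbits of bounded diameter, hence by properness every point in it has a finite-index stabilizer; for a general $\Gamma$ and $Z$ this simply cannot hold. What the standard construction requires, and what your later remarks about matching ``orbit-type by orbit-type'' and using the finite isotropy groups in condition (3) of Definition~\ref{d:ad_rep} clearly presuppose, is a $\Gamma$-\emph{equivariant} family of slices: the individual $W_z$ are not $\Gamma$-invariant, but the collection is permuted by the action, $\gamma W_z = W_{\gamma z}$, with the index set $D$ a $\Gamma$-invariant coarsely dense subset. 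Each $W_z$ is then invariant only under the finite stabilizer of $z$, and you build $V$ by choosing, on a set of orbit representatives, block unitaries $\phi(\chi_{W_z})H_Z\to\phi'(\chi_{W'_z})H_{Z'}$ that intertwine the stabilizer representations (this is precisely what condition (3) provides), and extending equivariantly over the orbit. With this correction the rest of the bookkeeping --- $\mathrm{prop}(VTV^*)\leq\mathrm{prop}(T)+2R$ where $R$ controls $\mathrm{Supp}(V)$, local compactness by cutting with compactly supported functions near $g(\mathrm{supp}\,h')$, $\Gamma$-invariance of $VTV^*$ from equivariance of $V$ --- is routine and your sketch of it is sound.
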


By the universality of the maximal norm, the identity map on $\mathbb{C}[Z]^\Gamma$ extends to a canonical quotient map $$\lambda:C^*_{max}(Z)^{\Gamma}\rightarrow C^*(Z)^{\Gamma}.$$
 We then obtain a homomorphism
 $$\lambda_*:K_*(C^*_{max}(Z)^{\Gamma})\rightarrow K_*(C^*(Z)^{\Gamma})$$
 induced by the map $\lambda$ at the $K$-theory level.

\subsection{The assembly map}
Let us briefly recall the definition of equivariant $K$-homology introduced by Kasparov~\cite{Ka,Yu95-2} for a covariant system $(C_0(Z),\Gamma, \phi)$.

\begin{defn}
For $i=0,1$, the $K$-homology groups $K_i^\Gamma(Z)=KK_i^\Gamma(C_0(Z),\mathbb{C})$ ($i=0,1$) are generated by certain cycles module a certain equivalent relation:

\begin{enumerate}[(1)]
\item each cycle for $K_0^\Gamma(Z)$ is a triple $(H,\phi,F)$, where $\phi:C_0(Z)\to B(H)$ is a covariant $*$-representation and $F$ is a $\Gamma$-invariant bounded operator on $H$ such that $\phi(f) F-F\phi(f)$, $\phi(f)(FF^*-I)$ and $\phi(f)(F^*F-I)$ are compact operators for all $f\in C_{0}(Z), \gamma\in \Gamma$;
\item each cycle for $K_1^\Gamma(Z)$ is a triple $(H,\phi,F)$, where$\phi:C_0(Z)\to B(H)$ is a covariant $*$-representation and $F$ is a $\Gamma$-invariant bounded operator on $H$, and $F$ is a $\Gamma$-invariant and self-adjoint operator on $H$ such that $\phi(f)(F^2-I)$, and $\phi(f) F-F\phi(f)$ are compact for all $f\in C_0(Z), \gamma\in \Gamma$.
\end{enumerate}

In both cases, the equivalence relation on cycles is given by homotopy of the operator $F$. 
\end{defn}
Let us now define the {\it equivariant index map}
$${\rm Ind}^\Gamma: K^\Gamma_*(Z)\to K_*(C^*(Z)^\Gamma).$$
Note that every class in $K^{\Gamma}_*(Z)$ can be represented by a cycle $(H,\phi,F)$ such that $(C_0(Z),\Gamma,\phi)$ is an admissible covariant system~\cite{KY}.
  Let $(H,\phi, F)$ be a cycle in $K_0^\Gamma(Z)$ such that $(C_0(Z),\Gamma,\phi)$ is an admissible covariant system.
   
Let $\{U_i\}_{i\in I}$ be a locally finite, $\Gamma$-equivariant and uniformly bounded open cover of $X$, and let $\{\psi_i\}_{i\in I}$ be a $\Gamma$-equivariant partition of unity subordinate to $\{U_i\}_{i\in I}$. Define $$F'=\sum\limits_{i\in I}\phi(\sqrt{\psi_i})F\phi(\sqrt{\psi_i}),$$
where the infinite sum converges in strong topology. Note that $(H,\phi, F')$ is equivalent to $(H, \phi, F)$ in $K^\Gamma_0(Z)$. Moreover, the operator $F'$ has finite propagation, therefore $F'$ is a multiplier of $C^*(Z)^\Gamma$ and is invertible modulo $C^*(Z)^\Gamma$. Thus, $F'$ gives rise to an element in $K_0(C^*(Z)^\Gamma)$, denoted by $\partial([F'])$. We define the equivariant index map
$${\rm Ind}^\Gamma: K^\Gamma_0(Z)\to K_0(C^*(Z)^\Gamma)$$
by
$$
{\rm Ind}^\Gamma([H,\phi, F])=\partial([F'])
$$
for all $[H,\phi, F]\in K^\Gamma_0(Z)$, where $F'$ is defined as above.
Similarly, we can define the equivariant index map 
$${\rm Ind}^\Gamma: K^\Gamma_1(Z)\to K_1(C^*(Z)^\Gamma).$$
\subsection{The equivariant coarse Baum--Connes conjecture}
In this subsection, we shall recall the equivariant coarse Baum--Connes conjecture for a discrete metric space with bounded geometry.

A metric space $X$ is said to have bounded geometry if 
$$\sup_{x \in X} \#B_R(x) <\infty$$
 for each $R>0$, where $B_R(x)=\{y \in X: d(x, y)\leq R\}$.

\begin{defn}
Let $d>0$. The Rips complex $P_d(X)$ at scale $d$ is the simplicial polyhedron whose vertex set is $X$ and where a finite subset $Y\subseteq X$ spans a simplex if and only if $d(x,y)\leq d$ for all $x, y \in Y$.
\end{defn}
Let $X$ be a discrete $\Gamma$-space. Each Rips complex $P_d(X)$ ie quipped with the spherical metric. Recall that on each path connected component of $P_d(X)$, the spherical metric is the maximal metric whose restriction to the simplex spanned by a finite subset $\left\{ x_0,x_1,\cdots, x_n\right\}$ is the metric obtained by identifying the simplex with
$$S_+^n:=\Big\{(s_0,s_1,\cdots, s_n)\in \mathbb{R}^{n+1}:s_i\geq 0, \sum\limits_{i=0}^n s_i^2 =1\Big\}$$
via the map
$$\sum\limits_{i=0}^n t_i x_i\mapsto \left(\frac{t_0}{\sqrt{\sum_{i=0}^n t_i^2}}, \frac{t_1}{\sqrt{\sum_{i=0}^n t_i^2}},\cdots,\frac{t_n}{\sqrt{\sum_{i=0}^n t_i^2}}\right),$$
where $S_+^n$ is endowed with the standard Riemannian metric. The distance of a pair of points in different connected components of $P_d(X)$ is defined to be infinity.

Each Rips complex $P_d(X)$ admits a proper $\Gamma$-action by isometries defined by
$$\gamma \cdot \sum\limits_{i=1}^k c_i x_i=\sum\limits_{i=1}^k c_i( \gamma\cdot x_i)$$
for all $\sum\limits_{i=1}^k c_i x_i \in P_d(X)$ and $\gamma\in \Gamma$. 

The following conjecture is called \emph{the equivariant coarse Baum--Connes conjecture}:
\begin{conj}[The equivariant coarse Baum--Connes conjecture]
Let $X$ be a discrete metric space with bounded geometry and let $\Gamma$ be a countable discrete group acting on $X$ properly by isometries. Then the equivariant index map
$$\rm{Ind}^\Gamma:\lim\limits_{d\rightarrow\infty}K_*^\Gamma(P_d(X))\rightarrow \lim\limits_{d\rightarrow\infty}K_*(C^*(P_d(X))^\Gamma)\cong K_*(C^*(X)^\Gamma)$$
 is an isomorphism.
\end{conj}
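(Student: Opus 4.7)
The plan is to prove the conjecture under the hypotheses of Theorem~\ref{main-thm}, following Yu's approach to the coarse Baum--Connes conjecture adapted to the equivariant setting, and coupled with the $K$-theoretic amenability assertion of Theorem~\ref{main-thm-2}. First I would use the coarse embedding $f \colon X/\Gamma \to \mathcal{H}$ together with the uniform equivariant coarse equivalence of the $\Gamma$-orbits to lift $f$, up to uniformly bounded error, to an equivariant object defined on $X$ itself. This lifting is the geometric heart of the argument: the orbit-uniformity hypothesis is exactly what is needed to patch local equivariant lifts of $f$ on each orbit into a globally defined object with uniformly bounded distortion, despite the failure of $X$ itself to equivariantly coarsely embed.

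With the equivariant lift in hand, I would assemble, in the spirit of Higson--Kasparov--Trout, a coefficient $C^*$-algebra $\mathcal{A}$ of ``non-commutative functions'' on $\mathcal{H}$, and use it to define the reduced and maximal equivariant twisted Roe algebras $C^*(P_d(X),\mathcal{A})^\Gamma$ and $C^*_{\max}(P_d(X),\mathcal{A})^\Gamma$, together with their localization counterparts $C^*_L(P_d(X),\mathcal{A})^\Gamma$. The proof then proceeds through three cornerstone steps. Step one: show by a Mayer--Vietoris / cutting-and-pasting argument, using bounded geometry and the uniform geometry of orbits, that evaluation-at-zero induces an isomorphism $K_*(C^*_L(P_d(X),\mathcal{A})^\Gamma) \xrightarrow{\cong} K_*(C^*(P_d(X),\mathcal{A})^\Gamma)$. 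Step two: establish Proposition~\ref{iso-max-red-twist}, that the canonical surjection $C^*_{\max}(P_d(X),\mathcal{A})^\Gamma \to C^*(P_d(X),\mathcal{A})^\Gamma$ is an isomorphism on $K$-theory; it is here that amenability of $\Gamma$ is decisive, as the twist by $\mathcal{A}$ reduces the problem to weak containment of the trivial representation of $\Gamma$ in the regular one. Step three: invoke an infinite-dimensional Bott periodicity in the Higson--Kasparov--Trout style to show that the twisted equivariant index map is an isomorphism onto the direct limit of $K_*(C^*(P_d(X),\mathcal{A})^\Gamma)$.

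Finally, I would organize the Bott and Dirac asymptotic morphisms and the three isomorphisms above into a single commutative diagram in which the equivariant index map $\mathrm{Ind}^\Gamma$ factors as the Dirac map after the Bott map, and read off that $\mathrm{Ind}^\Gamma$ is an isomorphism in the limit $d \to \infty$. The same diagram, combined with the maximal--reduced identification both for the Roe algebra (from Step two applied to the untwisted side) and its twisted version, will simultaneously yield Theorem~\ref{main-thm-2}.

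The principal obstacle is Step two: proving that the maximal twisted equivariant Roe algebra agrees with its reduced version at the level of $K$-theory. In contrast to the cocompact situation, the twisted algebra is not a crossed product in any direct way, since the $\Gamma$-action on $X$ is only proper (with possibly torsion stabilizers) and the coefficients $\mathcal{A}$ are built from a coarse embedding of $X/\Gamma$ rather than of $X$. One must track carefully how $\Gamma$ acts on the Bott coefficients along orbits and exploit amenability together with the uniform equivariant coarse equivalence of orbits to show that every $*$-representation of the twisted algebra factors through the reduced one. It is precisely the inability to bypass amenability at this step that prevents the argument from extending to general a-T-menable $\Gamma$.
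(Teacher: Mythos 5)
You correctly restrict to the hypotheses of Theorem~\ref{main-thm}; the conjecture as stated is open, and both you and the paper prove it only in that restricted setting. Your broad architecture (twisted equivariant Roe and localization algebras, a maximal-versus-reduced comparison, and Higson--Kasparov--Trout Bott--Dirac periodicity assembled into a ladder diagram) matches the paper, and your Steps two and three line up reasonably with Propositions~\ref{iso-max-red-twist} and~\ref{identity-bott-dirac}. However, there are two genuine gaps.

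First, the ``geometric heart'' you describe --- patching local lifts of $\xi\colon X/\Gamma\to H$ into an equivariant coarse embedding of $X$ itself --- is not what the paper does, and in general cannot be done. The motivating Arzhantseva--Tessera examples have $X$ not coarsely embeddable into Hilbert space at all, so no such lift exists; this is precisely the obstruction the paper is designed to circumvent. Instead, the twisted algebras (Definition~\ref{alg-twist}) are built directly from $\xi\circ\pi\colon X\to H$, which is $\Gamma$-\emph{invariant} (constant on orbits) rather than $\Gamma$-equivariant. The support and differentiability conditions are imposed around $\xi(\pi(x))$, and the operator invariance $T(\gamma x,\gamma y)=T(x,y)$ is compatible because $\pi(\gamma x)=\pi(x)$. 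The orbit-uniformity hypothesis is used for something different: in Lemma~\ref{decomp-twisted-cbc} and Lemma~\ref{iso-idea-max-red-twi} it lets one identify, uniformly in $x$, the ideals supported near $B(\xi(\pi(x)),r)$ with algebras over $N_S(\Gamma\cdot x)$, which are all uniformly equivariantly coarsely equivalent to $|\Gamma|$.

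Second, your Step one is incomplete. After decomposing into the ideals $O_{r,k}$ and identifying $\Pi^u_{x\in Y_k}C^*(P_d(N_S(\Gamma\cdot x)),\mathcal A(H))_{O_x}^\Gamma$ with $C^*(P_d(\Gamma),\Pi^u_{x\in Y_k}\mathcal A(H)_{O_x})^\Gamma$ (and similarly for the localization algebras), one still has to prove the evaluation map for those pieces is an isomorphism on $K$-theory. The paper concludes precisely by invoking the Baum--Connes conjecture with coefficients for $\Gamma$, which holds because $\Gamma$ is amenable. You omit this input entirely; Mayer--Vietoris, bounded geometry, and orbit uniformity alone do not close the argument. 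Amenability of $\Gamma$ is thus used twice --- via BCC with coefficients in Step one, and via the Schur-multiplier (positive-type functions converging pointwise to $1$) argument in Step two, where the paper in fact obtains a $C^*$-algebra isomorphism $C^*_{\max}(P_d(X),\mathcal A(H))^\Gamma\cong C^*(P_d(X),\mathcal A(H))^\Gamma$, not merely a $K$-theory isomorphism. A minor further point: ${\rm Ind}^\Gamma$ does not factor through the Bott and Dirac maps; rather, the evaluation maps sit in a ladder with the twisted evaluation isomorphism in the middle row, and one concludes by a retract-style diagram chase using only that $\alpha_*\circ\beta_*$ (and its localization version) is the identity.
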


\subsection{Localization algebras and local index map}
In this subsection,  we shall recall the notions of the localization algebras and the local index map.
\begin{defn}
Let $Z$ be a proper $\Gamma$-space. The {\it equivariant localization algebra} $C_L^*(Z)^\Gamma$ is the norm closure of the algebra of all bounded and uniformly norm-continuous functions $$f:[0,+\infty)\rightarrow C^*(Z)^\Gamma,$$
such that
$$\text{propagation}(f(t))\rightarrow 0\ \text{as}\ t\rightarrow\infty,$$
with respect to the norm
 $$\|f\|=\sup\limits_{t\in [0,+\infty)}\|f(t)\|.$$
\end{defn}

Now, let us recall the local $\Gamma$-index map
$${\rm Ind}: K_*^\Gamma(Z)\to K_*(C^*_L(Z)^\Gamma).$$
For each positive integer $n$, let $\{U_{n,i}\}_{i \in I}$ be a locally finite and $\Gamma$-invariant open cover for $Z$ such that 
$${\rm diam}(U_{n,i})< \frac{1}{n}$$ 
for all $i\in I$. Let $\{\phi_{n,i}\}_{I}$ be a continuous partition of unity subordinate to $\{U_{n,i}\}_{I}$. Let $[(H, \phi, F)]\in K_{0}^{\Gamma}(Z)$. Define a continuous path of operators $F(t)_{t\in[0, \infty)}$ on $H$ by
$$
F(t)=\sum_{i}\bigl((1-(t-n))\phi_{n,i}^{\frac{1}{2}}F\phi_{n+1,i}^{\frac{1}{2}}+(t-n)\phi_{n+2,i}^{\frac{1}{2}}F\phi_{n+1,i}^{\frac{1}{2}}\bigr)
$$
for all $t\in [n,n+1)$, where the infinite sum converges in the strong operator topology. Notice that ${\rm propagation}(F(t))\rightarrow 0$ as $t\rightarrow\infty.$
Therefore, the path $(F(t))_{t \in [0, \infty)}$ is a multiplier of $C^*_L(X)^\Gamma$ and $F(t)$ is a unitary modulo $C^*_L(Z)^\Gamma$. Hence $F(t)$ gives rise to an element $\partial[F(t)]$ in $K_0(C^*_L(Z)^\Gamma)$. We  define the local index map
$$Ind^{\Gamma}_{L}:K_0^\Gamma(Z) \to K_0(C^*_L(Z)^\Gamma).$$
Similarly, we can define
$${\rm Ind}: K_1^\Gamma(Z)\to K_1(C^*_L(Z)^\Gamma).$$

Now we have the following result which is an equivariant analogue of Theorem 3.2 in~\cite{Yu97}. It can be proved by the 

\begin{thm}\label{bcc}
Let $X$ be a discrete metric space with bounded geometry and let $\Gamma$
be a countable discrete group acting on $X$ properly and isometrically. Then the local index map
$$\rm{Ind}^\Gamma_ L: K_*^{\Gamma}(P_d(X))\rightarrow K_*(C_L^*(P_d(X))^{\Gamma})$$ is an isomorphism.
\end{thm}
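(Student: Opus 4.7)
The plan is to establish the isomorphism for each fixed scale $d > 0$ by a $\Gamma$-equivariant Mayer--Vietoris induction on the dimension of the simplicial complex $P_d(X)$, mirroring the non-equivariant argument of \cite{Yu97} but carried out $\Gamma$-equivariantly (compare the treatments in \cite{Fu, WY-book}). Since $X$ has bounded geometry, $P_d(X)$ has finite simplicial dimension for each fixed $d$, so the induction terminates.

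First, I would establish the \emph{equivariant strong Lipschitz homotopy invariance} of both functors $K_*^\Gamma(\cdot)$ and $K_*(C_L^*(\cdot)^\Gamma)$, together with the naturality of $\mathrm{Ind}_L^\Gamma$ under such homotopies. The key point is that the path $F(t)$ defining the local index class has propagation tending to zero, so a $\Gamma$-equivariant strongly Lipschitz homotopy transports such paths without disturbing the small-propagation property.

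Second, I would set up a $\Gamma$-equivariant Mayer--Vietoris six-term exact sequence for both functors. Given a $\Gamma$-invariant decomposition $P_d(X) = A \cup B$ into closed $\Gamma$-invariant subcomplexes, there are six-term exact sequences
\[
\cdots \to K_{*+1}^\Gamma(A\cup B) \to K_*^\Gamma(A\cap B) \to K_*^\Gamma(A) \oplus K_*^\Gamma(B) \to K_*^\Gamma(A\cup B) \to \cdots
\]
and similarly for $K_*(C_L^*(\cdot)^\Gamma)$, with $\mathrm{Ind}_L^\Gamma$ a natural transformation between them. The five lemma then deduces the isomorphism on $A \cup B$ from isomorphisms on $A$, $B$ and $A \cap B$.

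Third, I would induct on $\dim P_d(X)$. At the inductive step, cover $P_d(X)$ by a $\Gamma$-invariant open neighborhood $A$ of the $(k-1)$-skeleton that deformation retracts $\Gamma$-equivariantly onto it, together with a $\Gamma$-invariant union $B$ of small neighborhoods of the open $k$-cells, so that $A \cap B$ deformation retracts onto a disjoint union of $\Gamma$-orbits of $(k-1)$-spheres. Lipschitz homotopy invariance and Mayer--Vietoris reduce the step to the base case of a zero-dimensional $\Gamma$-space. By properness such a space decomposes into orbits $\Gamma/F$ with $F$ finite, and both sides may be identified with $K_*(C^*(F))$ via Morita equivalence with the stabilizer algebra, with $\mathrm{Ind}_L^\Gamma$ corresponding to the identity.

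The main obstacle will be the coherent bookkeeping in the $\Gamma$-equivariant Mayer--Vietoris sequence: one must choose admissible covariant representations compatibly on $A$, $B$ and $A \cap B$, and construct cut-and-paste operators whose propagation tends to zero in a way compatible with both the $\Gamma$-action and the finite stabilizers of the simplices. Handling the non-trivial finite isotropy groups requires $\Gamma$-invariant partitions of unity and careful use of the third condition in the definition of an admissible covariant system (Definition \ref{d:ad_rep}) so that the localization-algebra constructions restrict correctly to each $F$-invariant piece.
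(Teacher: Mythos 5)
Your proposal follows exactly the approach the paper intends: it explicitly calls Theorem \ref{bcc} ``an equivariant analogue of Theorem 3.2 in \cite{Yu97}'' and the (truncated) proof indication is that it can be proved by carrying over Yu's localization-algebra argument --- strong Lipschitz homotopy invariance, a Mayer--Vietoris sequence, and induction on the simplicial dimension of $P_d(X)$ --- in the $\Gamma$-equivariant setting, which is precisely what you outline. The one point I would tighten is your base case: a zero-dimensional proper $\Gamma$-space is in general an \emph{infinite} disjoint union of orbits $\Gamma/F_i$ with varying finite stabilizers $F_i$, so rather than identifying both sides outright with a single $K_*(C^*(F))$, one should either invoke continuity of both functors over $\Gamma$-compact subsets to reduce to a finite union, or argue directly that the localization algebra of a proper discrete $\Gamma$-set splits compatibly with the product decomposition of equivariant $K$-homology.
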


For any $d>0$, there is a natural evaluation map
$$e:C_L^*(P_d(X))^{\Gamma} \to C^*(P_d(X))^{\Gamma}$$
by
$$e(f)=f(0)$$
for all $f\in C_L^*(P_d(X))^{\Gamma}$. This is a $*$-homomorphism, thus induces a homomorphism
$$e_*:K_*(C_L^*(P_d(X))^{\Gamma}) \to K_*(C^*(P_d(X))^{\Gamma})$$
on $K$-theory.

We have the following commutative diagram:
$$
\xymatrix{
                &         \lim\limits_{d\rightarrow \infty}K_*(C_L^*(P_d(X))^{\Gamma}) \ar[d]^{e_*}     \\
  \lim\limits_{d\rightarrow \infty}K_*^{\Gamma}(P_d(X)) \ar[ur]_{\cong}^{\rm{Ind}_L^{\Gamma}} \ar[r]^{\rm{Ind}^{\Gamma}} & \lim\limits_{d\rightarrow \infty}K_*(C^*(P_d(X))^{\Gamma}).            }
$$

As a result, in order to prove that the map $\rm{Ind}^\Gamma$ is an isomorphism, it suffices to show that the map
$$e_*:\lim\limits_{d\rightarrow \infty}K_*(C_L^*(P_d(X))^{\Gamma})\rightarrow \lim\limits_{d\rightarrow \infty}K_*(C^*(P_d(X))^{\Gamma})$$
induced by the evalaution map on $K$-theory is an isomorphism.

\section{Equivariant twisted Roe algebras and localization algebras}

In this section, we shall follow the constructions in \cite{Yu} to define $C^*$-algebras encoding the geometry of the quotient space $X/\Gamma$. These algebras include maximal and reduced equivariant twisted Roe algebras, and equivariant twisted localization algebras. We prove that the K-theory of equivariant twisted localization algebras is isomorphic to the K-theory of equivariant twisted Roe algebras under the assumptions on the geometry of $X/\Gamma$ and the orbits. We also prove that the canonical quotient map between the maximal equivariant twisted Roe algebras and the reduced equivariant twisted Roe algebras is an isomorphism under the same assumptions.

\subsection{The equivariant twisted Roe algebras along the quotient space}\label{twist-quotient}

Let $\Gamma$ be a countable discrete group, and let $X$ be a $\Gamma$-space with bounded geometry. Assume that $\xi: X/\Gamma \to H$ is a coarse embedding of $X$ into  Hilbert space $H$. Let $\pi:X\rightarrow X/\Gamma$ be the quotient map.
We shall first recall the $C^*$-algebra associated to an infinite-dimensional Euclidean space introduced by Higson--Kasparov--Trout \cite{HKT}.

Let $H$ be a real (countably infinite-dimensional) Hilbert space.  Denote by $V_a$, $V_b$, etc., the finite-dimensional affine subspaces of $H$. Let $V_a^0$
be the finite-dimensional linear subspaces of $H$ consisting of differences of elements
of $V_a$. Let ${\rm Cliff}(V_a^0)$ be the complexified Clifford algebra of $V_a^0$, and let $\mathcal {C}(V_a)$ be the graded $C^*$-algebra of continuous functions  from $V_a$ to ${\rm Cliff}(V_a^0)$ vanishing at infinity. Let $\mathcal {S}$ be the $C^*$-algebra $C_0(\mathbb{R})$, graded according to
the odd and even functions. Define the graded tensor product $$\mathcal {A}(V_a)=\mathcal {S}\widehat{\otimes}\mathcal {C}(V_a).$$
If $V_a\subseteq V_b$, we have a decomposition $V_b=V_{ba}^0+V_a$, where $V_{ba}^0$ is the orthogonal complement of $V_a^0$ in $V_b^0$. For each $v_b \in V_b$, there is a unique decomposition $v_b=v_{ba}+v_a$, where $v_{ba}\in V_{ba}^0$ and $v_a\in V_a$. Every function $h$ on $V_a$ can be extended to a function $\tilde{h}$ on $V_b$ by the formula $$\tilde{h}(v_{ba}+v_a)=h(v_a).$$

If $V_a\subseteq V_b$, we use $C_{ba}$  to denote the function $V_b\rightarrow {\rm Cliff}(V_b^0)$, $v_b\mapsto v_{ba}$, where $v_{ba}$ is
considered as an element of ${\rm Cliff}(V_b^0)$ via the inclusion $V_{ba}^0\subset \textrm{Cliff}(V_b^0)$.
Let $X$ be the unbounded multiplier of $\mathcal{S}$  with degree one given by the point-wise multiplication with the function $x\mapsto x$. Note that $X\widehat{\otimes}1+1\widehat{\otimes}C_{ba}$ is an unbouded, essentially self-adjoint operator with degree one \cite{HKT}.
So we can use functional calculus to define a $*$-homomorphism
$$\beta_{ba}:\mathcal {A}(V_a)\rightarrow \mathcal {A}(V_b)$$
by the formula
$$\beta_{ba}(g\widehat{\otimes}h)=g(X\widehat{\otimes}1+1\widehat{\otimes}C_{ba})(1\widehat{\otimes}\tilde{h}),$$
for $g\in \mathcal {S}$ and $h\in \mathcal {C}(V_a)$, where $g(X\widehat{\otimes}1+1\widehat{\otimes}C_{ba}).$

\begin{rem}
Let $g_0(x)=e^{-x^2}$ and $g_1(x)=xe^{-x^2}$. It is not difficult to check that $$g_0(X\widehat{\otimes}1+1\widehat{\otimes}C_{ba})=g_0(X)\widehat{\otimes}g_0(C_{ba})$$ for $g_0=e^{-x^2}$ and $$g_1(X\widehat{\otimes}1+1\widehat{\otimes}C_{ba})=g_0(X)\widehat{\otimes}g_1(C_{ba})+g_1(X)\widehat{\otimes}g_0(C_{ba})$$ for $g_1(x)=xe^{-x^2}$.
Moreover $g_0(X)=g_0(x)$, $g_1(X)=g_1(x)$ as multiplication operators, and $(g_0(C_{ba})\tilde{h})(v_b)=g_0(\|v_{ba}\|)h(v_a)$, $(g_1(C_{ba})\tilde{h})(v_b)=v_{ba}g_0(\|v_{ba}\|)h(v_a)$ when we view $C_{ba}$ as an unbounded multiplier on $\mathcal {C}(V_b)$.
\end{rem}

The maps $(\beta_{ba})$ make the collection $(\mathcal {A}(V_a))$ into a directed system as $V_a$ ranges over finite-dimensional affine subspaces of $H$. Define the $C^*$-algebra $\mathcal {A}(H)$ by $$\mathcal {A}(H)=\lim\limits_{\rightarrow}\mathcal {A}(V_a),$$
where the inductive limit is taken over all finite-dimensional affine subspaces of $H$.

Let $\mathbb{R}_+\times H$ be the topological space endowed with the weakest topology for which the projection onto $H$ is weakly continuous and the function $(t,w)\mapsto t^2+\|w\|^2$ is continuous. This topology makes $\mathbb{R}_+\times H$ a locally compact Hausdorff space. Note that for $v\in H$ and $r>0$, the subset $$B(v,r)=\{(t,w)\in \mathbb{R}_+\times H: t^2+\|v-w\|^2<r^2\}$$
is an open subset of $\mathbb{R}_+\times H$.
For each finite-dimensional subspace $V_a\subseteq H$, $C_0(\mathbb{R}_+ \times V_a)$ is included in $\mathcal {A}(V_a)$ as its center. If $V_a\subseteq V_b$, then $\beta_{ba}$ takes $C_0(\mathbb{R}_+\times V_a)$ into $C_0(\mathbb{R}_+\times V_b)$. Then $C^*$-algebra $\lim\limits_{\rightarrow}C_0(\mathbb{R}_+\times V_a)$ is isomorphic to $C_0(\mathbb{R}_+\times H)$,
where the direct limit is over the directed set of all finite-dimensional affine subspaces $V_a$ of $H$.

\begin{defn}\label{supp_a}
The {\it support} of an element $a\in \mathcal {A}(H)$ is the complement of all $(t,v)\in \mathbb{R}_+\times H$ such that there exists $g\in C_0(\mathbb{R}_+\times H)$ with $g(t,v)\neq 0$ and $g\cdot a=0$.
\end{defn}

For each $d>0$, the $\Gamma$-action on $X$ induces a proper and isometric $\Gamma$-action on $P_d(X)$ by
$$\gamma \cdot \sum\limits_{i=1}^k c_i x_i=\sum\limits_{i=1}^k c_i( \gamma\cdot x_i)$$
for $\sum\limits_{i=1}^k c_i x_i \in P_d(X)$ and $\gamma \in \Gamma$.

Note that the coarse embedding $\xi:X/\Gamma\rightarrow H$ induces a coarse embedding $$\xi:P_d(X)/\Gamma\rightarrow H$$
by
$$\xi\Big(\pi(\sum\limits_{i=1}^k c_i x_i)\Big)=\sum\limits_{i=1}^k c_i \xi(\pi(x_i)),$$
for all $\sum\limits_{i=1}^k c_i x_i \in P_d(X)$ where $\pi: P_d(X) \to P_d(X)/\Gamma$ is the quotient map induced by the $\Gamma$-action on $P_d(X)$.

For any element $x\in P_d(X)$, we define
$$W_k(\pi(x))=\xi(\pi(x))+\,\mathrm{span}\{\xi(\pi(y))-\xi(\pi(x)) : y\in X/\Gamma, d(\pi(x),\pi(y))\leq k^2\}.$$
The subspace $W_k(\pi(x))$ is a finite-dimensional affine subspace of $H$ by the bounded geometry of $X$. We can assume that $\bigcup\limits_{k\geq 1} W_k(\pi(x))$ is dense in $H$, otherwise we can replace $H$ with the norm closure of $\bigcup\limits_{k\geq 1} W_k(\pi(x))$.
For $k<k'$, let
$$\beta_{k',k}(\pi(x)):\mathcal {A}(W_k(\pi(x)))\rightarrow \mathcal {A}(W_{k'}(\pi(x)))$$
be the map defined by the inclusion of $W_k(\pi(x))$ into $W_{k'}(\pi(x))$.
%in Definition \ref{clif-alg}

Let
$$\beta_k(\pi(x)):\mathcal {A}(W_k(\pi(x))\rightarrow \mathcal {A}(H)$$
the map defined by the definition of $\mathcal {A}(H)$. We write $\beta(\pi(x))$ for $$\beta_0(\pi(x)):\mathcal {S}\cong \mathcal {A}(W_0(\pi(x)))\rightarrow \mathcal {A}(H).$$

For each $d>0$, fix a countable and $\Gamma$-invariant dense subset $X_d$ of $P_d(X)$ such that $X_{d_1}\subseteq X_{d_2}$ if $d_1\leq d_2$.
\begin{defn}\label{alg-twist}
The \emph{algebraic twisted equivariant Roe algebra} $C_{alg}^*(P_d(X), \mathcal {A}(H))^\Gamma$ is defined to be the set of all functions $T: X_d \times X_d \to \mathcal{A}(H)\widehat{\otimes}K$ such that
\begin{enumerate}[(1)]
\item There exists $M\geq 0$ such that $\|T(x,y)\|\leq M$ for all $x,y\in X$;

\item There exists an integer $N$ such that
$$T(x,y)\in \beta_N(\pi(x))(\mathcal {A}(W_N(\pi(x))))\widehat{\otimes}K\subseteq \mathcal {A}(H)\widehat{\otimes}K$$ for all $x,y\in X$, where $\beta_N(\pi(x)):\mathcal {A}(W_N(\pi(x)))\rightarrow \mathcal {A}(H)$ is the $*$-homomorphism associated to the inclusion of $W_N(\pi(x))$ into $H$, and $K$ is the algebra of compact operators on the Hilbert space $H_0\widehat{\otimes}l^2(\Gamma)$;

\item There exists $r_1>0$  such that if $d(x,y)>r_1$, then $T(x,y)=0$;

\item There exists $L>0$ such that for each $y\in X_d$, $$\sharp\{x:T(x,y)\neq 0\}\leq L,\qquad \sharp\{x:T(y,x)\neq 0\}\leq L;$$

\item For any bounded set $B\subseteq P_d(X)$, the set
$$\{(x,y)\in B\times B\cap X_d\times X_d|T(x,y)\neq 0\}$$
is finite;

\item There exists $r_2>0$ such that
%B_{\mathbb{R}_+\times H }(\xi(\pi(x)),r_2)
$$
\mbox{Supp}(T(x,y)) \subseteq \left\{(s,h)\in\mathbb{R}_+\times H:s^2+\|h-\xi(\pi(x))\|^2<r_2^2\right\}
$$
for $x,y\in X$;

\item There exists $c>0$ such that if $Y=(s,h)\in \mathbb{R}_+\times W_N(\pi(x))$ with $\|Y\|=\sqrt{s^2+\|h\|^2}\leq 1$ and if $T_1(x,y)$ satisfies that
$(\beta_N(\pi(x))\widehat{\otimes}1)(T_1(x,y))=T(x,y)$, then the derivative of the
function $T_1(x,y):\mathbb{R}\times W_N(\pi(x))\rightarrow \text{Cliff}(W_N(\pi(x)))\widehat{\otimes}K$ in the direction of $Y$ exists, denoted by $D_Y(T_1(x,y))$, and $\|D_Y(T_1(x,y))\|\leq c$ for all $x,y\in X$;
\item $T(\gamma x, \gamma y)=T(x,y)$ for all $x,y\in X$ and $\gamma\in \Gamma$.
\end{enumerate}
\end{defn}

We define a multiplication on $C_{alg}^*(P_d(X), \mathcal {A}(H))^\Gamma$ by $$(T_1 T_2)(x,y)=\sum\limits_{z\in X_d}T_1(x,z)T_2(z,y)$$
and an involution by $T^*(x,y)=\big(T(y,x)\big)^*$. Then $C^*_{alg}(P_d(X),\mathcal {A}(H))^\Gamma$ is a $*$-algebra.
Let
$$E=\left\{\sum\limits_{x\in X_d}a_x[x]:a_x\in \mathcal {A}(H)\widehat{\otimes} K,\sum\limits_{x\in X_d}a_x^*a_x\ \text{converge in norm}\right\}.$$
Then $E$ is a Hilbert module over $\mathcal {A}(H)\widehat{\otimes}K$ with
$$\left\langle\sum\limits_{x\in X_d}a_x[x],\sum\limits_{x\in X_d}b_x[x]\right\rangle=\sum\limits_{x\in X_d}a_x^*b_x,$$
$$\left(\sum\limits_{x\in X_d}a_x[x]\right)a=\sum\limits_{x\in X_d}a_xa[x]$$
for all $a\in \mathcal {A}(H)\widehat{\otimes}K$ and $\sum\limits_{x\in X_d}a_x[x]\in E$.

Now we can define a $*$-representation of $C^*_{alg}(X,\mathcal {A}(H))^{\Gamma}$ on $E$ by
$$T\left(\sum\limits_{x\in X_d}a_x[x]\right)=\sum\limits_{y\in X_d}\left(\sum\limits_{x\in X_d}T(y,x)a_x\right)[y]$$
for all $T\in C^*_{alg}(P_d(X),\mathcal {A}(H))^{\Gamma}$ and $\sum\limits_{x\in X_d}a_x[x]\in E$.

\begin{defn}
The {\it reduced equivariant twisted Roe algebra} $C^*(P_d(X),\mathcal {A}(H))^{\Gamma}$ is the operator norm closure of $C^*_{alg}(P_d(X),\mathcal {A}(H))^{\Gamma}$ in $B(E)$, where $B(E)$ is the algebra of all bounded adjointable homomorphisms from $E$ to $E$.
\end{defn}

By similar arguments as in Lemma \ref{lem-maximal-norm}, the maximal norm on the algebraic equivariant twisted Roe algebra is also well-defined. Thus, we can define the maximal equivariant twisted Roe algebra.
\begin{defn}
 The {\it maximal equivariant twisted Roe algebra} $C_{max}^*(P_d(X),\mathcal {A}(H))^{\Gamma}$ is defined to be the completion of the $*$-algebra $C^*_{alg}(P_d(X),\mathcal {A}(H))^{\Gamma}$ with respect to the maximal norm
$$\|T\|_{max}:=\sup\{\|\phi(T)\|:\phi:C^*_{alg}(P_d(X),\mathcal {A}(H))^{\Gamma}\rightarrow B(E_{\phi}),\ \text{a}\ \text{$\ast$-representation}\}.$$
\end{defn}

Let $C_{L,alg}^*(P_d(X), \mathcal {A}(H))^\Gamma$ be the set of all bounded, uniformly norm continuous functions
$$g:\mathbb{R}_+\rightarrow C_{alg}^*(P_d(X),\mathcal {A}(H))^\Gamma$$
such that:
\begin{enumerate}[(1)]
  \item there exists $N$ such that
  $$g(t)(x,y)\in (\beta_N(\pi(x))\hat{\otimes}1)(\mathcal {A}(W_N(\pi(x)))\hat{\otimes}K)\subseteq \mathcal {A}(H)\hat{\otimes}K$$
  for all $x,y\in X_d$ and $t\in \mathbb{R}_+$;
  \item $\lim\limits_{t \to \infty}\mbox{propagation}(f(t))\to 0 \ \text{as}\  t\rightarrow \infty$;
  \item there exists $R>0$ such that $\text{Supp}(g(t)(x,y))\subseteq B_{\mathbb{R}_+\times H}(\xi(\pi(x)),R)$ for all $ x,y\in X_d$ and $t\in \mathbb{R}_+$;
  \item there exists $L>0$ such that for any $y\in P_d(X)$,
  \begin{center}
  $\sharp\{x:g(t)(x,y)\neq 0\}<L$, $\sharp\{x:g(t)(y,x)\neq 0\}<L$
   \end{center}
  for any $t\in \mathbb{R}_+$.
  \item There exists $c>0$ such that if $Y=(s,h)\in \mathbb{R}_+\times W_N(\pi(x))$ with $\|Y\|=\sqrt{s^2+\|h\|^2}\leq 1$ and if $g_1(t)(x,y)$ satisfies that
  $$(\beta_N(\pi(x))\widehat{\otimes}1)(g(t)_1(x,y))=T(x,y),$$
  then the derivative of the function
  $$g_1(t)(x,y):\mathbb{R}\times W_N(\pi(x))\rightarrow \mbox{Cliff}(W_N(\pi(x)))\widehat{\otimes}K$$
  in the direction of $Y$ exists, denoted by $D_Y(g(t)_1(x,y))$, and $$\|D_Y(g(t)_1(x,y))\|\leq c$$
  for all $x,y\in X$ and $t \in \mathbb{R}_+$;
%  \item $T(\gamma x, \gamma y)=T(x,y)$ for all $x,y\in X$ and $\gamma\in \Gamma$.
%  \item for any bounded set $B\subseteq P_d(X)$, the set $\left\{(x,y)\in B\times B\cap X_d\times X_d|g(t)(x,y)\neq 0\right\}$ is finite for any $t\geq 0$.
\end{enumerate}

\begin{defn}\label{local-alg}
The {\it twisted equivariant localization algebra} $C_{L}^*(P_d(X),\mathcal {A}(H))^{\Gamma}$ is the completion of $C_{L,alg}^*(P_d(X),\mathcal {A}(H))^{\Gamma}$ with respect to the norm
$$\|g\|=\sup\limits_{t\in \mathbb{R}_+}\|g(t)\|_{C^*(P_d(X),\mathcal {A}(H))^\Gamma}.$$
\end{defn}

There is an evaluation map
$$e:C_L^*(P_d(X),\mathcal {A}(H))^{\Gamma}\rightarrow C^*(P_d(X),\mathcal {A}(H))^\Gamma,$$
by
$$e(g)=g(0),$$
for all $g \in C_L^*(P_d(X),\mathcal {A}(H))^{\Gamma}$.
We have a homomorphism
$$e_*:K_*(C_L^*(P_d(X),\mathcal {A}(H))^{\Gamma})\rightarrow K_*(C^*(P_d(X),\mathcal {A}(H))^\Gamma),$$
induced by the evaluation map on $K$-theory.

In the rest of this subsection, we shall prove the following result.
\begin{prop}\label{twisted-cbc}
Let $\Gamma$ be a countable discrete group acting properly and isometrically on a discrete metric space $X$ with bounded geometry. Assume that all $\Gamma$-orbits in $X$ are uniformly equivariantly coarsely equivalent. If the quotient space $X/\Gamma$ admits a coarse embedding into Hilbert space and $\Gamma$ satisfies the Baum--Connes conjecture with coefficients, then the map
$$e_*:\lim\limits_{d\rightarrow \infty}K_*(C_L^*(P_d(X),\mathcal {A}(H))^{\Gamma})\rightarrow \lim\limits_{d\rightarrow \infty}K_*(C^*(P_d(X),\mathcal {A}(H))^\Gamma)$$
is an isomorphism.
\end{prop}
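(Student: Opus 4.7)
The plan is to reduce, via a cutting-and-pasting Mayer--Vietoris argument organised along the quotient space $X/\Gamma$, to a base case that is handled directly by the Baum--Connes conjecture with coefficients. For each $\Gamma$-invariant subset $Y\subseteq X$ I introduce subalgebras
$$C^*(P_d(X),\mathcal{A}(H))_Y^\Gamma\ \subseteq\ C^*(P_d(X),\mathcal{A}(H))^\Gamma,\qquad C_L^*(P_d(X),\mathcal{A}(H))_Y^\Gamma\ \subseteq\ C_L^*(P_d(X),\mathcal{A}(H))^\Gamma,$$
consisting of operators $T$ with $T(x,y)=0$ unless both $x,y\in Y$. The evaluation map restricts to these subalgebras, and when $Y=Y_1\cup Y_2$ is a decomposition into $\Gamma$-invariant subsets with coarsely controlled intersection $Y_1\cap Y_2$, standard arguments yield compatible Mayer--Vietoris six-term exact sequences in $K$-theory for the Roe and localisation versions. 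A five-lemma argument then reduces the statement for $Y$ to those for $Y_1$, $Y_2$, and $Y_1\cap Y_2$.

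For the base case, I take $Y$ so that $\pi(Y)\subseteq X/\Gamma$ has uniformly bounded diameter. By the uniform equivariant coarse equivalence of orbits, such a $Y$ is equivariantly coarsely equivalent to $\Gamma\times F$ for a finite set $F$, with constants independent of $\pi(Y)$. Correspondingly, the affine subspace $W_N(\pi(x))$ varies only within a fixed finite-dimensional $W\subseteq H$ as $x$ ranges over $Y$, so the coefficient algebra entering the construction can be reduced to $\mathcal{A}(W)\widehat{\otimes}K$ with trivial $\Gamma$-action on $W$. Under these identifications, $C^*(P_d(X),\mathcal{A}(H))_Y^\Gamma$ is Morita equivalent to $\bigl(\mathcal{A}(W)\widehat{\otimes}K\bigr)\rtimes_r\Gamma$, while $C_L^*(P_d(X),\mathcal{A}(H))_Y^\Gamma$ computes the $\Gamma$-equivariant representable $K$-homology of a proper $\Gamma$-space with coefficients in $\mathcal{A}(W)\widehat{\otimes}K$. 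The evaluation map $e_*$ is thereby identified with the Baum--Connes assembly map for $\Gamma$ with coefficients in $\mathcal{A}(W)\widehat{\otimes}K$, which is an isomorphism by hypothesis.

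To assemble the base case into the full statement, I use the coarse embedding $\xi:X/\Gamma\to H$ to construct, for each scale $r>0$, a finite colouring of $X/\Gamma$: a decomposition into finitely many $\Gamma$-invariant pieces, each of which is a disjoint union of uniformly bounded components separated by mutual distance exceeding $r$. The twisted algebra restricted to such a piece splits as a $c_0$-direct sum indexed by the components, reducing it to the base case. Iterating Mayer--Vietoris over the colour decomposition a finite number of times yields the isomorphism for each $P_d(X)$, and passage to the inductive limit in $d$ (together with sending the decomposition scale $r\to\infty$) yields the conclusion.

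The principal technical obstacle is verifying that each stage of the Mayer--Vietoris construction preserves the intricate list of conditions in Definition~\ref{alg-twist}, especially the affine support condition (6) linking the Clifford-algebra valued entries to the embedding $\xi$, and the uniform derivative bound (7), both of which must survive cut-downs and gluings. A second delicate point is that even when $\pi(Y)$ has bounded diameter, $Y$ may contain infinitely many $\Gamma$-orbits with varying finite stabilisers, and the identification of the base-case algebra with a single crossed product must proceed uniformly and compatibly with the affine-fibre structure of $\mathcal{A}(H)$; it is precisely the uniform equivariant coarse equivalence of orbits that makes this uniform identification possible, so that the entire problem collapses to a single instance of the Baum--Connes conjecture with coefficients.
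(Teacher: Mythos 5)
Your overall blueprint -- a finite colouring of $X/\Gamma$ produced by the coarse embedding $\xi$, a Mayer--Vietoris argument over the colours, uniform coarse equivalence of orbits to reduce to $\Gamma$, and the Baum--Connes conjecture with coefficients for the base case -- matches the shape of the paper's argument. But there is a genuine gap in how you assemble infinitely many ``base case'' components within one colour class, and this gap is precisely where the paper's key lemma does its work.

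You claim that the twisted algebra restricted to a colour class ``splits as a $c_0$-direct sum indexed by the components, reducing it to the base case.'' This is not correct: both the twisted Roe algebra and the twisted localization algebra restricted to a colour class decompose as \emph{uniform products} over the components (the defining conditions in Definition~\ref{alg-twist} impose only uniform bounds across components, not vanishing), and $K$-theory does \emph{not} commute with $\ell^\infty$-type products. So you cannot simply invoke the base case separately on each component. The paper circumvents exactly this obstruction by using the uniform equivariant coarse equivalence to identify the entire colour-class ideal at once with a single cocompact equivariant Roe algebra $C^*(P_d(\Gamma),\, \Pi^u_{x\in Y_k}\mathcal{A}(H)_{O_x})^\Gamma$, where the uniform product has been moved \emph{inside} the coefficient $C^*$-algebra; the evaluation map then becomes one instance of the Baum--Connes assembly map with coefficients in this product algebra. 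The hypothesis ``Baum--Connes with coefficients'' is applied exactly once per colour, not once per component, and the uniform coarse equivalence of orbits is what makes this single packaging possible. Your closing sentence gestures at this, but the $c_0$-sum step that precedes it is incompatible with it.

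A secondary issue: you cut along $\Gamma$-invariant subsets $Y\subseteq X$ with the \emph{strict} condition $T(x,y)=0$ unless $x,y\in Y$. These cut-downs are hereditary subalgebras but not two-sided ideals (finite propagation lets a product leak outside $Y$), so they do not directly yield Mayer--Vietoris sequences. The paper instead cuts by open subsets $O\subseteq\mathbb{R}_+\times H$, using the support condition (6) of Definition~\ref{alg-twist}: the constraint $\mathrm{Supp}(T(x,y))\subseteq O$ is stable under left and right multiplication by arbitrary elements, so $C^*(P_d(X),\mathcal{A}(H))_O^\Gamma$ is an honest two-sided ideal and the cut-and-paste Lemma~\ref{algebra-m-v} applies cleanly. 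This is the other structural advantage of decomposing in the Hilbert-space direction rather than the spatial direction, and it is worth internalising: the ``twist'' is introduced precisely so that one can localise in $\mathbb{R}_+\times H$ rather than in $X$.
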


To prove the above proposition, we need to analyze the ideals of the twisted Roe algebras and localization algebras associated to the open subsets of $\mathbb {R}_+\times H$. Let $O$ be an open subset of $\mathbb{R}_+\times H$. Define $C_{alg}^*(P_d(X),\mathcal {A}(H))_O^{\Gamma}$ to be a $*$-subalgebra of $C_{alg}^*(P_d(X),\mathcal {A}(H))^{\Gamma}$ by
$$C_{alg}^*(P_d(X),\mathcal {A}(H))_O^{\Gamma}=\{T\in C_{alg}^*(P_d(X),\mathcal {A}(H))^{\Gamma}: \text{Supp}(T(x,y))\subseteq O, \forall x,y\in X\}.$$
We can define $C_{max}^*(P_d(X),\mathcal {A}(H))_O^{\Gamma}$ and $C^*(P_d(X),\mathcal {A}(H))_O^{\Gamma}$ to be the norm closure of $C_{alg}^*(P_d(X),\mathcal {A}(H))_O^{\Gamma}$ in $C_{max}^*(P_d(X),\mathcal {A}(H))^{\Gamma}$ and $C^*(P_d(X),\mathcal {A}(H))^{\Gamma}$, respectively. We would like to point out that the norm on $C_{max}^*(P_d(X),\mathcal {A}(H))_O^{\Gamma}$ is not the maximal norm of the $*$-algebra $C_{alg}^*(P_d(X),\mathcal {A}(H))_O^{\Gamma}$.

Similarly, let
$$C_{L,alg}^*(P_d(X),\mathcal {A})^{\Gamma}_O:=\{g\in C_{L,alg}^*(P_d(X),\mathcal {A})^{\Gamma}:\text{Supp}(g(t))\subseteq X_d\times X_d\times O\}.$$
Define $C_{L}^*(P_d(X),\mathcal{A})^{\Gamma}_O$ to be the norm closure of $C_{L,alg}^*(P_d(X),\mathcal {A})^{\Gamma}_O$ in $C_{L}^*(P_d(X),\mathcal{A})^{\Gamma}$.

Let $$B(\xi(\pi(x)),r)=\{(s,v)\in \mathbb {R}_+\times H, s^2+\|v-\xi(\pi(x))< r^2\|\},$$
where $\pi:X \to X/\Gamma$ is the quotient map and $\xi: X/\Gamma \to H$ is the coarse embedding. Denote $O_r=\bigcup\limits_{x\in X}B(\xi(\pi(x)),r)$. We have
$$C^*(P_d(X),\mathcal {A}(H))^{\Gamma}=\lim\limits_{r\rightarrow +\infty} C^*(P_d(X),\mathcal{A}(H))_{O_r}^{\Gamma}$$
and $$C_L^*(P_d(X),\mathcal {A}(H))^{\Gamma}=\lim\limits_{r\rightarrow +\infty} C_L^*(P_d(X),\mathcal {A}(H))_{O_r}^{\Gamma}.$$

Recall that $\pi: X \to X/\Gamma$ is the natural quotient map associated to the $\Gamma$-action on $X$.

For any $r>0$, since $X/\Gamma$ has bounded geometry, there exists finitely many mutually disjoint subsets of $X/\Gamma$, say $\widetilde{X}_k$ for $1\leq k\leq k_0$, such that
\begin{enumerate}[(1)]
    \item $X/\Gamma=\bigsqcup\limits_{k=1}^{k_0}\widetilde{X_k};$
    \item for each $\widetilde{X}_k$ and for all $z,z' \in \widetilde{X}_k$, if $z\neq z'$, then
    $$
    \|\xi(z)-\xi(z')\|\geq 2r,
    $$
    where $\xi:X/\Gamma \to H$ is the coarse embedding.
\end{enumerate}
Let $Y \subset X$ be a $\Gamma$-domain of $X$ in the sense that each orbit intersects with $Y$ at exactly one point.
For simplicity, we denote
$$Y_k:=(\pi|_{Y})^{-1}(\widetilde{X}_k)\subset Y$$
for $k=1,2,\cdots, k_0$, where $\pi|_{Y}: Y \to X/\Gamma$ is the restriction of $\pi: X \to X/\Gamma$ to the $\Gamma$-domain $Y\subset X$. For each $k=1,2,\cdots, k_0$ and $r>0$, denote
$$O_{r,k}=\bigcup\limits_{x \in Y_k}B(\xi(\pi(x)), r).$$

\begin{lem} \label{decomp-twisted-cbc}
With the same assumptions as Proposition \ref{twisted-cbc} and $\{O_{r,k}\}_{r, k}$ defined above, the map $$e_*:\lim\limits_{d\rightarrow \infty}K_*(C_L^*(P_d(X),\mathcal {A}(H))_{O_{r,k}}^{\Gamma})\rightarrow \lim\limits_{d\rightarrow \infty}K_*(C^*(P_d(X),\mathcal {A}(H))_{O_{r,k}}^\Gamma)$$
induced by the evaluation map is an isomorphism for all $r>0$ and $1\leq k\leq k_0$.
\end{lem}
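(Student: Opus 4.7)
The plan is to use the disjointness of the balls comprising $O_{r,k}$ to split both algebras as direct sums indexed by $Y_k$, to identify all summands with a single model via the uniform equivariant coarse equivalence of $\Gamma$-orbits, and finally to reduce the evaluation-map isomorphism on the model to the Baum--Connes conjecture with coefficients for $\Gamma$ after contracting the bounded coefficient ball by a strong Lipschitz homotopy.

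First I would exploit geometric disjointness. Because $Y_k$ contains exactly one representative per $\Gamma$-orbit projecting into $\widetilde{X}_k$ and the $\xi$-images of points of $\widetilde{X}_k$ are pairwise $2r$-separated, the balls $\{B(\xi(\pi(x)),r)\}_{x\in Y_k}$ are pairwise disjoint in $\mathbb{R}_+\times H$. Hence for any $T$ in the algebra, the $\mathcal{A}(H)$-support of each matrix entry $T(u,v)$ lies in exactly one such ball, labelled by a well-defined index $x(u,v)\in Y_k$. Finite propagation in both $X$ and $\mathbb{R}_+\times H$, together with the coarse embedding bounds $\rho_{\pm}$, force $x(u,v)$ to depend only on the $\Gamma$-orbit of $u$; combined with $\Gamma$-invariance this yields direct-sum decompositions of $C^*(P_d(X),\mathcal{A}(H))_{O_{r,k}}^{\Gamma}$ and $C^*_L(P_d(X),\mathcal{A}(H))_{O_{r,k}}^{\Gamma}$ into summands $A_x^{(d)}$ and $A_{x,L}^{(d)}$ indexed by $x\in Y_k$, respected by the evaluation map summand by summand.

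Next I would identify all summands with a single model. For each $x\in Y_k$, the algebra $A_x^{(d)}$ is the equivariant twisted Roe algebra of an $R$-neighborhood of $\Gamma\cdot x$ in $X$ with coefficients in $\mathcal{A}$ applied to the finite-dimensional affine ball $B(\xi(\pi(x)),r)\cap W_N(\pi(x))$. Translation by $-\xi(\pi(x))$ in $H$ identifies these coefficient algebras uniformly in $x$, and the uniform equivariant coarse equivalence of orbits matches the underlying $\Gamma$-spaces uniformly, so all $A_x^{(d)}$ become isomorphic to a common model $A^{(d)}$, and likewise $A_{x,L}^{(d)}\cong A_L^{(d)}$ (possibly at the price of enlarging $d$). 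It then suffices to show that $e_*:K_*(A_L^{(d)})\to K_*(A^{(d)})$ is an isomorphism as $d\to\infty$. The bounded finite-dimensional coefficient ball can now be contracted to the single point $\xi(\pi(x_0))$ by a strong Lipschitz homotopy of $\mathcal{A}$, as in Yu \cite{Yu}, so up to Morita equivalence the problem becomes showing that the evaluation map from the localization algebra of $\Gamma\cdot x_0$ to $C^*(\Gamma\cdot x_0)^{\Gamma}$ is an isomorphism on $K$-theory. Since $\Gamma$ acts properly and cocompactly on the orbit $\Gamma\cdot x_0\cong\Gamma/\mathrm{Stab}(x_0)$, this equivariant Roe algebra is Morita equivalent to a reduced crossed product of $\Gamma$ with finite-dimensional coefficients, and the required $K$-theory isomorphism is exactly an instance of the Baum--Connes conjecture with coefficients for $\Gamma$ assumed in Proposition~\ref{twisted-cbc}.

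The principal obstacle I anticipate is establishing the direct-sum decompositions rigorously at the level of the reduced (rather than algebraic or maximal) norm: this requires showing that the Hilbert $\mathcal{A}(H)\widehat{\otimes}K$-module $E$ splits as an orthogonal sum compatibly with the $Y_k$-labelling, and an analogous check is needed for the module structures entering the localization algebra. A secondary subtlety is that the $C^*$-isomorphism $A_x^{(d)}\cong A^{(d)}$ furnished by the uniform coarse equivalence is in general only approximate on the nose, so one must pass to a larger parameter $d'\geq d$ before it becomes a genuine isomorphism, which is why the direct-limit structure as $d\to\infty$ is essential to the argument.
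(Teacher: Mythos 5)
Your starting point — exploiting the pairwise $2r$-disjointness of the balls $\{B(\xi(\pi(x)),r)\}_{x\in Y_k}$ to decouple the algebra by $x\in Y_k$ — is the same as the paper's. But there is a genuine gap at the reduction step, and it stems from a misidentification of the type of decomposition.

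The twisted Roe algebra $C^*(P_d(X),\mathcal{A}(H))_{O_{r,k}}^{\Gamma}$ is \emph{not} a direct sum over $x\in Y_k$: a general element $T$ with support in $O_{r,k}$ has nonzero components in every coordinate $x$, with the constants in Definition~\ref{alg-twist} (norm bound, propagation, Lipschitz derivative bound, etc.) uniform in $x$. So the correct decomposition is into a \emph{uniform product} $\Pi^u_{x\in Y_k}$, not a $c_0$-direct sum. This matters because $K$-theory does not commute with infinite products. Once you have identified each factor $A_x^{(d)}$ with a common model $A^{(d)}$, you claim ``it then suffices to show that $e_*: K_*(A^{(d)}_L)\to K_*(A^{(d)})$ is an isomorphism as $d\to\infty$''. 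This inference is false: knowing the evaluation map is an isomorphism on each factor does not yield an isomorphism on $K_*\big(\Pi^u_{x} A_x^{(d)}\big)$, because the latter is not $\Pi_x K_*(A^{(d)}_x)$. (Even in the simplest example of $\Pi_n\mathcal{K}$ versus $\bigoplus_n\mathcal{K}$, $K_0$ of the product is much larger than the product of $K_0$'s.) This also explains why your final step ends up invoking Baum--Connes with coefficients even though your reduction would, if valid, only ever produce a single finite-dimensional model; the need for full coefficients is a symptom that the per-summand reduction is not actually available.

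The paper avoids this by never leaving the product: using the uniform equivariant coarse equivalence of orbits it produces a family of $\Gamma$-equivariant strongly Lipschitz homotopy equivalences $f_x: P_d(\Gamma)\to P_d(N_S(\Gamma\cdot x))$ that are \emph{uniform in $x$}, and then performs the crucial rewriting
$$\Pi^u_{x\in Y_k} C^*\big(P_d(\Gamma),\mathcal{A}(H)\big)_{O_x}^{\Gamma} \;\cong\; C^*\Big(P_d(\Gamma),\ \Pi^u_{x\in Y_k}\mathcal{A}(H)_{O_x}\Big)^{\Gamma},$$
and similarly for the localization algebra. In other words the whole uniform product of coefficient algebras $\Pi^u_{x\in Y_k}\mathcal{A}(H)_{O_x}$ is bundled into a single $\Gamma$-$C^*$-algebra, and the Baum--Connes conjecture \emph{with coefficients in that $\Gamma$-algebra}, applied to the cocompact $\Gamma$-space $|\Gamma|$, gives the evaluation-map isomorphism in one stroke. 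This is the step missing from your argument; without it the reduction over the uncountably varying index $x\in Y_k$ cannot be made. Your secondary remarks about approximate isomorphisms and the direct limit in $d$ are sound but do not touch this issue.
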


\begin{proof}
For each $x\in X$, set $O_x=B(\xi(\pi(x)), r)$ for brevity. Define
$$\Pi^u_{x \in Y_k}C^*_{alg}(P_d(X), \mathcal {A}(H))_{O_x}^{\Gamma}$$
to be the $*$-subalgebra of the product algebra $\Pi_{x\in Y_k}C^*_{alg}(P_d(X), \mathcal {A}(H))_{O_x}^{\Gamma},$ such that for each element
$$\Pi_{x\in Y_k} T_x\in \Pi^u_{x\in Y_k}C^*_{alg}(P_d(X), \mathcal {A}(H))_{O_x}^{\Gamma},$$
the conditions in Definition \ref{alg-twist} are satisfied for all $T_x$ with $x\in X_k$ uniformly.

For each element $a \in \mathcal{A}(H)$ with support contained in $O_{r,k}$, it can be decomposed as a sum
$$a=\sum_{x\in Y_k} a_x,$$
where each $a_x$ is a restriction of $a$ to $O_x$ and can be viewed as a function supported in $O_x$ for all $x \in X$. It is obvious that
$$C^*_{alg}(P_d(X), \mathcal {A}(H))_{O_{r,k}}^{\Gamma}=\Pi^u_{x\in Y_k}C^*_{alg}(P_d(X), \mathcal {A}(H))_{O_x}^{\Gamma}.$$
We define $\Pi^u_{x\in Y_k}C^*(P_d(X), \mathcal {A}(H))_{O_x}^{\Gamma}$ to be the norm closure of
$$
\left\{\Pi_{x\in Y_k}T_x~\Big|~T_x\in C_{alg}^*(P_d(X), \mathcal {A}(H))_{O_x}^{\Gamma}, \sup_{x\in X_k}\|T_x\|<\infty\right\}
$$
under the supremum norm.

Similarly, we define $\Pi^u_{x\in Y_k}C_L^*(P_d(X), \mathcal {A}(H))_{O_x}^{\Gamma}$ to be the $C^*$-subalgebra of
$$\left\{\Pi_{x\in Y_k}b_x~\Big| \Pi_{x\in Y_k} b_x(t)\in \Pi^u_{x\in Y_k}C_{alg}^*(P_d(X), \mathcal {A}(H))_{O_x}^{\Gamma} \ \mbox{and}\ \sup_{x\in Y_k}\|b_x\|<\infty\right\}$$
generated by elements $\Pi_{x\in Y_k}b_x$, such that
\begin{enumerate}[(1)]
    \item the function
$$ \Pi_{x\in Y_k}b_x:\mathbb{R}_+\rightarrow \Pi^u_{x\in Y_k}C_{alg}^*(P_d(X), \mathcal {A}(H))_{O_x}^{\Gamma}$$
is uniformly norm-continuous in $t\in \mathbb{R}_+$;
\item there exists a bounded function $c(t)$ on $\mathbb{R}_+$ with $\lim\limits_{t\rightarrow\infty}c(t)=0$ such that $b_x(t)(y,y')=0$ whenever $d(y,y')>c(t)$ for all $x\in Y_k,y,y'\in X_d$ and $t\in \mathbb{R}_+$.
\end{enumerate}

By the definition of the algebraic equivariant twisted Roe algebra, we can prove that $$C^*(P_d(X),\mathcal {A})^{\Gamma}_{O{r,k}}\cong \lim\limits_{S\rightarrow \infty}\Pi^u_{x\in Y_k}C^*(P_d(N_S(\Gamma\cdot x)), \mathcal {A}(H))_{O_x}^{\Gamma}$$
and $$C_L^*(P_d(X),\mathcal {A})^{\Gamma}_{O_{r,k}}\cong \lim\limits_{S\rightarrow \infty}\Pi^u_{x\in Y_k}C_L^*(P_d(N_S(\Gamma\cdot x)), \mathcal {A}(H))_{O_x}^{\Gamma},$$
where $N_S(\Gamma\cdot x)$ is the $S$-neighbourhood of $\Gamma\cdot x$ in $X$.
So
$$\lim\limits_{d\rightarrow \infty}C^*(P_d(X),\mathcal {A})^{\Gamma}_{O{r,k}}\cong \lim\limits_{d\rightarrow \infty}\lim\limits_{S\rightarrow \infty}\Pi^u_{x\in Y_k}C^*(P_d(N_S(\Gamma\cdot x)), \mathcal {A}(H))_{O_x}^{\Gamma}$$
$$\ \ \ \ \ \ \ \ \ \ \ \ \ \ \ \ \ \ \ \ \ \ \ \ \ \ \ \ \cong \lim\limits_{S\rightarrow \infty}\lim\limits_{d\rightarrow \infty}\Pi^u_{x\in Y_k}C^*(P_d(N_S(\Gamma\cdot x)), \mathcal {A}(H))_{O_x}^{\Gamma}$$
and $$\lim\limits_{d\rightarrow \infty}C_L^*(P_d(X),\mathcal {A})^{\Gamma}_{O{r,k}}\cong \lim\limits_{d\rightarrow \infty}\lim\limits_{S\rightarrow \infty}\Pi^u_{x\in Y_k}C_L^*(P_d(N_S(\Gamma\cdot x)), \mathcal {A}(H))_{O_x}^{\Gamma}$$
$$\ \ \ \ \ \ \ \ \ \ \ \ \ \ \ \ \ \ \ \ \ \ \ \ \ \ \ \ \cong \lim\limits_{S\rightarrow \infty}\lim\limits_{d\rightarrow \infty}\Pi^u_{x\in Y_k}C_L^*(P_d(N_S(\Gamma\cdot x)), \mathcal {A}(H))_{O_x}^{\Gamma}.$$

It suffices to show that
$$e_*:\lim\limits_{d\rightarrow \infty}K_*\left(\Pi^u_{x\in Y_k}C_L^*(P_d(N_S(\Gamma\cdot x)), \mathcal {A}(H))_{O_x}^{\Gamma}\right)\ \ \ \ \ \  \ \ \ \ \ \ \ \ \ \ \ \ \ \ \ \ \ \ \ \ \ \ \ \ \ \ \ \ \ \ \ \ $$
$$\ \ \ \ \ \ \ \ \ \ \ \ \ \ \ \ \ \ \ \ \ \ \ \ \ \ \ \ \ \ \ \ \ \ \to \lim\limits_{d\rightarrow \infty}K_*(\Pi^u_{x\in Y_k}C^*(P_d(N_S(\Gamma\cdot x)), \mathcal {A}(H))_{O_x}^{\Gamma})$$
induced by the evaluation map on $K$-theory is an isomorphism for each $S>0$.

For each $S>0$ and $x \in Y_k$, we define a map:
$$f_x: P_d(\Gamma) \to P_d(N_S(\Gamma\cdot x)) $$
by
$$
f_x\left(\sum c_{g}g\right)=\sum_{g} c_g g x
$$
for each $\sum_{g \in \Gamma} c_{g}g \in P_d(\Gamma)$. When $d$ is suitably large, we obtain a family of continuous maps $\{f_j\}_{j\in J_k}$ satisfying the following conditions:
\begin{enumerate}[(1)]
    \item each $f_x$ is $\Gamma$-equivariant;
    \item there exists constants $c_1, c_2>0$ such that
    $$
     c_1\cdot  d(y, y')\leq d(f_x(y),f_x(y'))\leq c_2\cdot d(x, y),
    $$
    for all $y, y' \in P_d(\Gamma)$ and $x \in Y_k$;
    \item there exists a constant $c>0$ such that $f_x(P_d(\Gamma))$ is a $c$-net in $P_d(N_S(\Gamma \cdot x))$ for all $x \in Y_k$, i.e. $P_d(N_S(\Gamma \cdot x))=N_c(f_x(P_d(\Gamma)))$, where $N_c(f_x(P_d(\Gamma)))$ is the $c$-neighborhood of $f_x(P_d(\Gamma))$ in $P_d(N_S(\Gamma \cdot x))$;
    \item $\{f_x\}_{x\in Y_k}$ is a $\Gamma$-equivariantly strongly Lipschitz homotopy equivalence.
\end{enumerate}
Following from conditions (1) and (3) above, we have that
\begin{align*}
\Pi^u_{x\in Y_k}C^*(P_d(N_S(\Gamma\cdot x)), \mathcal {A}(H))_{O_x}^{\Gamma}  \cong & \Pi^u_{x\in Y_k}C^*(P_d(\Gamma), \mathcal{A}(H))_{O_x}^{\Gamma}\\
                             \cong & C^*(P_d(\Gamma), \Pi^u_{x \in Y_k} \mathcal {A}(H)_{O_x})^{\Gamma}.
\end{align*}
For the localization algebras, following the proof of the Proposition 3.7 in \cite{Yu97} by using the storngly $\Gamma$-equivalently Lipschitz homotopy equivalence,  we also have that
\begin{align*}
\Pi^u_{x\in Y_k}C_L^*(P_d(N_S(\Gamma\cdot x)), \mathcal {A}(H))_{O_x}^{\Gamma}  \cong & \Pi^u_{x\in Y_k}C_L^*(P_d(\Gamma), \mathcal {A}(H))_{O_x}^{\Gamma}\\
                             \cong & C_L^*(P_d(\Gamma), \Pi^u_{x \in X_k} \mathcal {A}(H)_{O_x})^{\Gamma}.
\end{align*}
Since the Baum--Connes conjecture with coefficients holds for the group $\Gamma$, the assembly map
$$
e_*:\lim\limits_{d\rightarrow \infty}K_*\left(C_L^*\left(P_d(\Gamma), \Pi^u_{x \in Y_k} \mathcal {A}(H)_{O_x}\right)^{\Gamma}\right) \to
\lim\limits_{d\rightarrow\infty}K_*\left(C^*\left(P_d(\Gamma), \Pi^u_{x \in Y_k} \mathcal {A}(H)_{O_x}\right)^{\Gamma}\right)
$$
is an isomorphism.

When $d>0$ is large enough, from the above discussion, we have the following commutative diagram:
\begin{equation*}
    \begin{tikzcd}
 K_*\left(\Pi^u_{x\in Y_k}C_L^*\left(P_d(N_S(\Gamma\cdot x)), \mathcal {A}(H)\right)_{O_x}^{\Gamma}\right) \ar{r}{e_*}\ar{d}{\cong} & K_*\left(\Pi^u_{x\in Y_k}C^*\left(P_d(N_S(\Gamma\cdot x)), \mathcal {A}(H)\right)_{O_x}^{\Gamma}\right)\ar{d}{\cong}\\
 K_*\left(C_L^*\left(P_d(\Gamma), \Pi^u_{x \in Y_k} \mathcal {A}(H)_{O_x}\right)^{\Gamma}\right) \ar{r}{e_*} & K_*\left(C^*\left(P_d(\Gamma), \Pi^u_{x \in Y_k} \mathcal {A}(H)_{O_x}\right)^{\Gamma}\right).
    \end{tikzcd}
\end{equation*}
It follows that the assembly map
$$e_*:\lim\limits_{d\rightarrow \infty}K_*\left(\Pi^u_{x\in Y_k}C_L^*\left(P_d(N_S(\Gamma\cdot x)), \mathcal {A}(H)\right)_{O_x}^{\Gamma}\right) \to\ \ \ \ \ \ \ \ \  \ \ \ \ \ \ \ \ \ \ \ \ \ \ \ \ \ \ \ \ \ \ \ \ \ \ \ \ \ \ \ \ $$
$$\ \ \ \ \ \ \ \ \ \ \ \ \ \ \ \ \ \ \ \ \ \ \ \ \ \ \ \ \ \ \ \ \ \ \
\lim\limits_{d\rightarrow \infty}K_*\left(\Pi^u_{x\in Y_k}C^*\left(P_d(N_S(\Gamma\cdot x)), \mathcal {A}(H)\right)_{O_x}^{\Gamma}\right)$$
is an isomorphism.
\end{proof}
To prove proposition \ref{twisted-cbc}, we also need the following lemma, which can be proved by the same arguments in Lemma 6.3 in \cite{Yu}.

\begin{lem}\label{algebra-m-v}
Let $X_1$ and $X_2$ be $\Gamma$-invariant subsets of $X$, and let $$O_r^1=\cup_{x\in X_1}B_{\mathbb{R}_+\times H}(\xi(\pi(x)),r),\quad  O_r^2=\cup_{x\in X_2}B_{\mathbb{R}_+\times H}(\xi(\pi(x)),r).$$  Then we have that
\begin{equation*}
\begin{split}
\lim_{\substack{r<r_0,r\rightarrow r_0}}C^*(P_d(X),\mathcal {A}(H))_{O^1_r}^{\Gamma}+\lim_{\substack{r<r_0,r\rightarrow r_0}}C^*(P_d(X),\mathcal {A}(H))_{O^2_r}^{\Gamma}\\=\lim_{\substack{r<r_0,r\rightarrow r_0}}C^*(P_d(X),\mathcal {A}(H))_{O^1_r\cup O^2_r}^{\Gamma};
\end{split}
\end{equation*}
\begin{equation*}
\begin{split}
\lim_{\substack{r<r_0,r\rightarrow r_0}}C_L^*(P_d(X),\mathcal {A}(H))_{O^1_r}^{\Gamma}+\lim_{\substack{r<r_0,r\rightarrow r_0}}C_L^*(P_d(X),\mathcal {A}(H))_{O^2_r}^{\Gamma}\\=\lim_{\substack{r<r_0,r\rightarrow r_0}}C_L^*(P_d(X),\mathcal {A}(H))_{O^1_r\cup O^2_r}^{\Gamma};
\end{split}
\end{equation*}
\begin{equation*}
\begin{split}
\lim_{\substack{r<r_0,r\rightarrow r_0}}C^*(P_d(X),\mathcal {A}(H))_{O^1_r}^{\Gamma}\cap\lim_{\substack{r<r_0,r\rightarrow r_0}}C^*(P_d(X),\mathcal {A}(H))_{O^2_r}^{\Gamma}\\=\lim_{\substack{r<r_0,r\rightarrow r_0}}C^*(P_d(X),\mathcal{A}(H))_{O^1_r\cap O^2_r}^{\Gamma};
\end{split}
\end{equation*}
\begin{equation*}
\begin{split}
\lim_{\substack{r<r_0,r\rightarrow r_0}}C_L^*(P_d(X),\mathcal {A}(H))_{O^1_r}^{\Gamma}\cap\lim_{\substack{r<r_0,r\rightarrow r_0}}C_L^*(P_d(X),\mathcal {A}(H))_{O^2_r}^{\Gamma}\\=\lim_{\substack{r<r_0,r\rightarrow r_0}}C_L^*(P_d(X),\mathcal {A}(H))_{O^1_r\cap O^2_r}^{\Gamma}.
\end{split}
\end{equation*}
\end{lem}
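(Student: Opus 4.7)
The plan is to adapt the argument from \cite[Lemma 6.3]{Yu} to the equivariant setting. The crucial observation is that $\Gamma$ acts trivially on the ambient Hilbert space $H$, since the coarse embedding $\xi\colon X/\Gamma \to H$ factors through the quotient; consequently any function on $\mathbb{R}_+\times H$ is automatically $\Gamma$-invariant. Combined with the fact that $C_b(\mathbb{R}_+\times H)$ embeds as central multipliers of $\mathcal{A}(H)$, this allows us to perform cutoffs and partitions of unity by entrywise multiplication with scalar functions while preserving $\Gamma$-invariance and the remaining structural conditions of Definition \ref{alg-twist}. The sets $O^i_r$ are $\Gamma$-invariant because $X_i$ is and $\Gamma$ acts trivially on $H$.

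For the two sum equalities, the inclusion $\supseteq$ is immediate, since the sum of operators supported in $O^1_r$ and $O^2_r$ has support in $O^1_r\cup O^2_r$. For the opposite inclusion, given $T$ in $C^*(P_d(X),\mathcal{A}(H))^{\Gamma}_{O^1_r\cup O^2_r}$ with $r<r_0$ and any $r'\in(r,r_0)$, I would choose smooth bounded functions $\phi_1,\phi_2\colon \mathbb{R}_+\times H\to[0,1]$ with $\phi_i$ equal to $1$ on an open neighborhood of $\overline{O^i_r}$, supported in $O^i_{r'}$, and with directional derivatives along every direction bounded by $C/(r'-r)$. Setting $T_i:=\phi_i\cdot T$ as central multiplication applied entrywise yields $T=T_1+T_2$ with $T_i\in C^*(P_d(X),\mathcal{A}(H))^{\Gamma}_{O^i_{r'}}$, since $\phi_1+\phi_2=1$ on a neighborhood of $O^1_r\cup O^2_r$. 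Conditions (1)--(5) and (8) of Definition \ref{alg-twist} are preserved because multiplication by a central bounded multiplier does not affect propagation, matrix supports, or $\Gamma$-equivariance; condition (6) is preserved by the support constraint on $\phi_i$; and condition (7) is preserved by the derivative bound on $\phi_i$. Passing to the norm closure gives the required decomposition at the $C^*$-algebra level. The localization analogue follows by applying the same time-independent decomposition pointwise in $t\in\mathbb{R}_+$, which automatically preserves uniform norm continuity.

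For the two intersection equalities, the inclusion $\supseteq$ is again immediate. For the opposite inclusion, given $T$ lying in both inductive limits and $\epsilon>0$, choose approximants $S^1,S^2$ with $\|T-S^i\|<\epsilon/3$, where $S^i$ is supported in $O^i_{r^i}$ for some $r^i<r_0$, and fix $r''\in(\max(r^1,r^2),r_0)$. Choose $\psi\colon\mathbb{R}_+\times H\to[0,1]$ equal to $1$ on an open neighborhood of $\overline{O^2_{r^2}}$, supported in $O^2_{r''}$, and with uniformly bounded directional derivatives. Then $\psi\cdot S^2=S^2$, so
$$\|T-\psi S^1\|\le\|T-\psi T\|+\|\psi\|_\infty\|T-S^1\|\le(1+\|\psi\|_\infty)\|T-S^2\|+\|T-S^1\|<\epsilon,$$
while $\psi S^1$ has support in $O^1_{r^1}\cap O^2_{r''}\subseteq O^1_{r''}\cap O^2_{r''}$, so it lies in $C^*(P_d(X),\mathcal{A}(H))^{\Gamma}_{O^1_{r''}\cap O^2_{r''}}$. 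Hence $T$ lies in the limit on the right. The localization version is handled pointwise in $t$ in the same way.

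The main obstacle is arranging the cutoff multipliers $\phi_1,\phi_2,\psi$ to have the uniform smoothness required to preserve condition (7) of Definition \ref{alg-twist}: the lifts of the resulting operators must have directional derivatives bounded uniformly in $x\in X_d$ along each finite-dimensional affine subspace $W_N(\pi(x))$. This is precisely why the statement involves inductive limits as $r\nearrow r_0$ rather than equality for a fixed $r$: the buffer $r'-r>0$ provides room to interpolate smoothly from $1$ to $0$ with derivative bounded by a constant depending only on $r'-r$, independent of $x$, following the explicit construction in \cite[Lemma 6.3]{Yu}.
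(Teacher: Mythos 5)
Your strategy is exactly what the paper intends: the authors give no proof here and simply cite Lemma 6.3 of \cite{Yu}, so the task is to reconstruct Yu's argument in the equivariant setting. The key observation you make — that $\Gamma$ acts trivially on the receiving Hilbert space $H$, since $\xi$ factors through $X/\Gamma$, so scalar cutoffs on $\mathbb{R}_+\times H$ are automatically $\Gamma$-invariant — is precisely what makes the adaptation go through, and your intersection argument is correct.

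However, the construction of the cutoffs $\phi_1,\phi_2$ for the sum inclusion is internally inconsistent. You require each $\phi_i$ to be identically $1$ on a neighborhood of $\overline{O^i_r}$ and supported in $O^i_{r'}$, and then also assert $\phi_1+\phi_2=1$ on a neighborhood of $O^1_r\cup O^2_r$. Whenever $\overline{O^1_r}\cap\overline{O^2_r}\neq\emptyset$ (which happens as soon as $X_1$ and $X_2$ have nearby orbits), the first requirement forces $\phi_1+\phi_2=2$ on the overlap, contradicting the second, and then $T_1+T_2=(\phi_1+\phi_2)T\neq T$. The fix is standard: drop the condition $\phi_i\equiv 1$ near $\overline{O^i_r}$ and instead take a genuine partition of unity subordinate to the cover $\{O^1_{r'},O^2_{r'}\}$ of a neighborhood of $\overline{O^1_r\cup O^2_r}$, keeping $\phi_i$ supported in $O^i_{r'}$, $\phi_1+\phi_2=1$ near $\overline{O^1_r\cup O^2_r}$, and retaining the derivative bound of order $1/(r'-r)$. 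You should also make explicit one point you pass over: for $\phi_i\cdot T(x,y)$ to continue to satisfy condition (2) of Definition \ref{alg-twist} (lying in the image of some fixed $\beta_N(\pi(x))$) and condition (7) (so the lift has bounded derivatives), the restriction of $\phi_i$ to the ball $B(\xi(\pi(x)),r_2)\supseteq\mathrm{Supp}\bigl(T(x,y)\bigr)$ must depend only on coordinates in some $W_{N'}(\pi(x))$ with $N'$ uniform in $x$. This works in Yu's construction because the cutoffs there are built from distances to the finitely many centers $\xi(\pi(z))$ lying near $\xi(\pi(x))$, all of which sit inside $W_{N'}(\pi(x))$ for a uniform $N'$ by bounded geometry; you should state this explicitly rather than relying on multiplication by an arbitrary bounded function on $\mathbb{R}_+\times H$.
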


\begin{proof}[\bf{Proof of Proposition \ref{twisted-cbc}}]
Since $$C^*(P_d(X),\mathcal {A}(H))^{\Gamma}=\lim\limits_{r\rightarrow +\infty} C^*(P_d(X),\mathcal {A}(H))_{O_r}^{\Gamma}$$
and $$C_L^*(P_d(X),\mathcal {A}(H))^{\Gamma}=\lim\limits_{r\rightarrow +\infty} C_L^*(P_d(X),\mathcal {A}(H))_{O_r}^{\Gamma},$$ where $O_r=\bigcup\limits_{1\leq k\leq k_0}O_{r,k}$, following the similar arguments in \cite{ShanWang} we obtain that
$$\lim\limits_{d\rightarrow \infty}\lim\limits_{r\rightarrow \infty}K_*( C^*(P_d(X),\mathcal {A}(H))_{O_r}^{\Gamma})\cong \lim\limits_{r\rightarrow \infty}\lim\limits_{d\rightarrow \infty}K_*( C^*(P_d(X),\mathcal {A}(H))_{O_r}^{\Gamma},$$
and
$$\lim\limits_{d\rightarrow \infty}\lim\limits_{r\rightarrow \infty}K_*(C_L^*(P_d(X),\mathcal {A}(H))_{O_r}^{\Gamma}\cong \lim\limits_{r\rightarrow \infty}\lim\limits_{d\rightarrow \infty}K_*(C_L^*(P_d(X),\mathcal {A}(H))_{O_r}^{\Gamma}.$$
We get the following commuting diagram:
$$\xymatrix{
 \lim\limits_{d\rightarrow\infty}K_*(C^{*}_{L}(P_{d}(X), \mathcal{A}(H))^\Gamma) \ar[d]_{\cong} \ar[r]^{e_*} & \lim \limits_{d\rightarrow\infty}K_*(C^{*}(P_{d}(X), \mathcal{A}(H))^\Gamma) \ar[d]_{\cong}  \\
 \lim \limits_{d\rightarrow\infty}\lim \limits_{r\rightarrow\infty}K_*(C_L^*(P_d(X),\mathcal {A}(H))_{O_r}^\Gamma) \ar[d]_{\cong}  \ar[r]^{e_*} & \lim \limits_{d\rightarrow\infty}\lim \limits_{r\rightarrow\infty}K_*(C^*(P_d(X),\mathcal {A}(H))_{O_r}^\Gamma)  \ar[d]_{\cong}  \\
   \lim \limits_{r\rightarrow\infty}\lim \limits_{d\rightarrow\infty}K_*(C_L^*(P_d(X),\mathcal {A}(H))_{O_r}^\Gamma) \ar[r]^{e_*} & \lim \limits_{r\rightarrow\infty}\lim \limits_{d\rightarrow\infty}K_*(C^*(P_d(X),\mathcal {A}(H))_{O_r}^\Gamma)  .}$$

The conclusion follows that the bottom horizon map is an isomorphism by using Lemma \ref{decomp-twisted-cbc}, Lemma \ref{algebra-m-v} and the Mayer--Vietoris sequence argument.
\end{proof}

\subsection{The maximal and reduced equivariant twisted Roe algebras}

In this subsection, we shall prove that the canonical quotient map $$\lambda: C_{max}^{*}(P_{d}(X), \mathcal{A}(H))^\Gamma \to C^{*}(P_{d}(X), \mathcal{A}(H))^\Gamma$$
is an isomorphism.

Recall that in Section \ref{twist-quotient}, we defined a $2$-sided $*$-ideal $C_{alg}^{*}(P_{d}(X), \mathcal{A}(H))_O^\Gamma$ of $C_{alg}^{*}(P_{d}(X), \mathcal{A}(H))^\Gamma$ consisting of the operators with entries supported in $O$ for each open subset $O$ in $\mathbb{R}_+\times H$, and the $C^*$-subalgebras $C_{max}^{*}(P_{d}(X), \mathcal{A}(H))_O^\Gamma$ and $C^{*}(P_{d}(X), \mathcal{A}(H))_O^\Gamma$ are the operator norm closure of $C_{alg}^{*}(P_{d}(X), \mathcal{A}(H))_O^\Gamma$ in $C_{max}^{*}(P_{d}(X), \mathcal{A}(H))^\Gamma$ and $C^{*}(P_{d}(X), \mathcal{A}(H))^\Gamma$, respectively. Recall that
$$O_{r,k}=\bigcup\limits_{x\in Y_k}B(\xi(\pi(x)), r)\subset \mathbb{R}_+\times H.$$

\begin{lem}\label{iso-idea-max-red-twi}
Let $X$ be a discrete metric space with bounded geometry, and let $\Gamma$ be a countable discrete amenable group. Assume that $\Gamma$ acts on $X$ properly and isometrically such that  the $\Gamma$-orbits are uniformly equivariantly coarsely equivalent. If the quotient space $X/\Gamma$ admits a coarse embedding into Hilbert space, then the canonical quotient map
$$\lambda:C_{max}^*(P_{d}(X),\mathcal {A})_{O_{r,k}}^{\Gamma}\rightarrow C^*(P_{d}(X),\mathcal {A}(H))_{O_{r,k}}^{\Gamma}$$
extended from the identity map on $C_{alg}^*(P_{d}(X),\mathcal {A}(H))_{O_{r,k}}^{\Gamma}$ is an isomorphism.
\end{lem}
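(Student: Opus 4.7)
The plan is to mimic the decomposition strategy used in the proof of Lemma~\ref{decomp-twisted-cbc} on both the maximal and reduced sides of the twisted Roe algebra, and then reduce the statement to the classical fact that for an amenable group $\Gamma$, the maximal and reduced crossed products coincide.

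First, because the balls $\{B(\xi(\pi(x)),r)\}_{x\in Y_k}$ are mutually disjoint, an element of $C^*_{alg}(P_d(X),\mathcal{A}(H))^\Gamma_{O_{r,k}}$ decomposes algebraically as an orbit-indexed direct sum of pieces supported in each $O_x$, with products and adjoints computed piecewise. I would use this to upgrade the isomorphism
$$C^*(P_d(X),\mathcal{A}(H))^\Gamma_{O_{r,k}} \;\cong\; \lim_{S\to\infty} \Pi^u_{x\in Y_k} C^*(P_d(N_S(\Gamma\cdot x)),\mathcal{A}(H))^\Gamma_{O_x}$$
from Lemma~\ref{decomp-twisted-cbc} to its maximal analogue. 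Next, using the uniform equivariantly coarse equivalence of the $\Gamma$-orbits, which provides the uniform $\Gamma$-equivariantly strongly Lipschitz homotopy equivalences $f_x:P_d(\Gamma)\to P_d(N_S(\Gamma\cdot x))$ constructed in Lemma~\ref{decomp-twisted-cbc}, I would identify
$$\Pi^u_{x\in Y_k} C^*_\bullet(P_d(N_S(\Gamma\cdot x)),\mathcal{A}(H))^\Gamma_{O_x} \;\cong\; C^*_\bullet\bigl(P_d(\Gamma),\, \Pi^u_{x\in Y_k}\mathcal{A}(H)_{O_x}\bigr)^\Gamma$$
for both completions $\bullet\in\{\mathrm{max},\,\emptyset\}$. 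All identifications here are implemented by $\Gamma$-equivariant finite-propagation operators and hence pass through every $C^*$-completion defined by covariant representations.

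Once both sides are realized as equivariant Roe algebras over the cocompact model space $P_d(\Gamma)$ with coefficient algebra $A := \Pi^u_{x\in Y_k}\mathcal{A}(H)_{O_x}$, the $\Gamma$-action on $P_d(\Gamma)$ is free, proper and cocompact. The standard Morita picture on both completions gives
$$C^*(P_d(\Gamma),A)^\Gamma \;\simeq\; (A\rtimes_r\Gamma)\otimes\mathcal{K}, \qquad C^*_{max}(P_d(\Gamma),A)^\Gamma \;\simeq\; (A\rtimes_{max}\Gamma)\otimes\mathcal{K},$$
where $\mathcal{K}$ is the compact operators, and the canonical quotient $\lambda$ corresponds to the canonical quotient $A\rtimes_{max}\Gamma\to A\rtimes_r\Gamma$ tensored with the identity on $\mathcal{K}$. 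Since $\Gamma$ is amenable, this quotient is an isomorphism; taking the direct limit in $S$ then yields the isomorphism asserted in the lemma.

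The main obstacle is the maximal-side decomposition in the first step. As the paper explicitly notes, the norm on $C^*_{max}(P_d(X),\mathcal{A}(H))^\Gamma_O$ is the \emph{subspace} norm inherited from the full maximal completion, not the intrinsic maximal norm of the $*$-algebra $C^*_{alg}(P_d(X),\mathcal{A}(H))^\Gamma_O$. I therefore have to show that every covariant $*$-representation of the full algebra restricts, on this ideal, to a representation compatible with the orbit-indexed product structure. The mutual disjointness of the $O_x$ for $x\in Y_k$ forces the corresponding operators to be mutually orthogonal in any such representation, so the restriction decomposes as a direct sum over $x\in Y_k$ whose norm is a supremum, which is precisely the norm defining $\Pi^u_{x\in Y_k} C^*_{max}(P_d(N_S(\Gamma\cdot x)),\mathcal{A}(H))^\Gamma_{O_x}$. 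Once this compatibility is checked, the three steps combine to give the stated isomorphism.
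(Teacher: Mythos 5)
Your proposal follows the paper's proof through the first half: you use the same orbit-indexed decomposition over the disjoint balls $O_x$, the same passage from $C^*_{alg}(P_d(X),\mathcal{A}(H))^\Gamma_{O_{r,k}}$ to a uniform product over $Y_k$, and the same invocation of the uniform equivariant coarse equivalence of orbits to collapse everything onto a single Roe algebra over $|\Gamma|$ (equivalently $P_d(\Gamma)$) with coefficient algebra $A=\Pi^u_{x\in Y_k}\mathcal{A}(H)_{O_x}$. Where you genuinely diverge is the final step: the paper finishes with a Schur-multiplier argument, whereas you finish by invoking the stable isomorphism $C^*_\bullet(|\Gamma|,A)^\Gamma\cong (A\rtimes_\bullet\Gamma)\otimes\mathcal{K}$ for the cocompact $\Gamma$-space $|\Gamma|$ (for $\bullet\in\{\max,\,r\}$), matching $\lambda$ to the canonical quotient of crossed products and then using amenability of $\Gamma$.

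Both routes are legitimate, but they trade off differently. The paper's argument is completely self-contained at the algebra level: since $\Gamma$ is amenable one chooses finitely supported positive-definite kernels $k_n$ converging pointwise to $1$, forms the Schur multipliers $M^{\max}_{k_n}$ and $M_{k_n}$ on the maximal and reduced sides, observes they intertwine $\lambda$, and notes that $M^{\max}_{k_n}$ pushes any $T\in\ker\lambda$ into the bounded-propagation part where $\lambda$ is injective; letting $n\to\infty$ gives $T=0$. This avoids ever realizing the twisted Roe algebras as (stabilized) crossed products. Your route reduces the lemma to the classical fact that $A\rtimes_{\max}\Gamma\to A\rtimes_r\Gamma$ is an isomorphism when $\Gamma$ is amenable, which is cleaner conceptually, but it requires you to establish the max-side stable isomorphism $C^*_{\max}(|\Gamma|,A)^\Gamma\cong (A\rtimes_{\max}\Gamma)\otimes\mathcal{K}$ together with compatibility of the two canonical quotients -- a step that is plausible and appears in various forms in the literature, but that the paper's proof was apparently designed to sidestep.

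The point you correctly flag as the ``main obstacle'' deserves a sharper treatment than the orthogonality remark you give. The paper explicitly warns that the norm on $C^*_{\max}(P_d(X),\mathcal{A}(H))^\Gamma_{O_{r,k}}$ is the subspace norm inherited from the closure in the ambient maximal algebra, not the intrinsic maximal norm of the $*$-ideal. Your orthogonality observation (that for $x\ne x'$ in $Y_k$ the pieces $T_x$, $T_{x'}$ multiply to zero since the $\mathcal{A}(H)$-supports lie in disjoint $O_x$, $O_{x'}$) shows that for any representation $\phi$ of the full $*$-algebra one has $\|\phi(\Pi_x T_x)\|=\sup_x\|\phi(T_x)\|$, which bounds the subspace norm above by a supremum of ``restricted'' norms. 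What it does not immediately give is that every representation of each individual piece $C^*_{alg}(P_d(N_S(\Gamma\cdot x)),\mathcal{A}(H))^\Gamma_{O_x}$ is realized as a restriction of a representation of the full algebra -- that is what you need for the reverse inequality and hence for the identification of the subspace completion with $\Pi^u_x C^*_{\max}(\cdots)_{O_x}^\Gamma$. To be fair, the paper's own proof also makes this identification $C^*_{\max}(P_d(X),\mathcal{A}(H))^\Gamma_{O_{r,k}}\cong C^*_{\max}(|\Gamma|,\Pi_x\mathcal{A}(H)_{O_x})^\Gamma$ without much elaboration, so this is a shared subtlety rather than a defect unique to your route; but if you want your Morita-theoretic endgame to be airtight you should justify this step carefully (for instance by exhibiting an explicit $*$-homomorphism from the full algebraic algebra onto each piece, so that representations of the piece pull back).
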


\begin{proof}
For brevity, we denote by $O$ the open subset $O_{r,k}$ and denote $O_x=B(\xi(\pi(x)), r)$. Let $\Pi^u_{x\in Y_k}C^*_{alg}(P_{d}(X), \mathcal {A}(H))_{O_x}^{\Gamma}$ be the $*$-algebra consisting of the operators
$$\Pi_{x\in Y_k} T_x\in \Pi^u_{x\in Y_k}C^*_{alg}(P_d(X), \mathcal {A}(H))_{O_x}^{\Gamma},$$
such that the family of operators $\{T_x\}_{x\in Y_k}$ satisfies conditions in Definition \ref{alg-twist} with uniform constants.
Since $O=\bigsqcup_{x\in Y_k} O_x$, we have that
$$C^*_{alg}(P_{d}(X), \mathcal {A}(H))_O^{\Gamma}=\Pi^u_{x\in Y_k}C^*_{alg}(P_{d}(X), \mathcal {A}(H))_{O_x}^{\Gamma}.$$
By the definition of each $C^*_{alg}(P_{d}(X), \mathcal {A}(H))_{O_x}^{\Gamma}$, we have $$\Pi^u_{x\in Y_k}C^*_{alg}(P_{d}(X), \mathcal {A}(H))_{O_x}^{\Gamma}=\lim\limits_{R\rightarrow \infty}\Pi^u_{x\in Y_k}C^*_{alg}(N_R(\Gamma\cdot x), \mathcal {A}(H))_{O_x}^{\Gamma},$$
where the inductive limit can be viewed as the union of these $*$-algebras.

Denote by $|\Gamma|$ the metric space of $\Gamma$ endowed with a word-length metric. Let $\Pi^u_{x\in Y_k}C^*_{alg}(|\Gamma|, \mathcal {A}(H))_{O_x}^{\Gamma}$ be the $C^*$-algebra of all the operators
$$\Pi_{x \in Y_k} T_x \in \Pi^u_{x\in Y_k}C^*_{alg}(|\Gamma|, \mathcal {A}(H))_{O_x}^{\Gamma}$$
satisfying that
\begin{enumerate}[(1)]
    \item each $T_x$ is a $\Gamma\times \Gamma$-matrix and $T_x(g,h)=T_x(\gamma g, \gamma h)$ for $g,h,\gamma\in \Gamma$;
    \item there exists a constant $M>0$, such that $\sup_{x\in Y_k, g,h \in \Gamma}\|T_x(g,h)\|\leq M$;
    \item $\sup_{x \in Y}\mbox{propagation}(T_x)<\infty$;
    \item there exists $c>0$ such that if $Y=(s,v)\in \mathbb{R}_+\times W_N(\pi(x))$ with $\|Y\|=\sqrt{s^2+\|v\|^2}\leq 1$ and if $T'_x(g,h)$ is an element satisfying
$(\beta_N(\pi(x_x))\widehat{\otimes}1)(T'_x(g,h))=T(g,h)$, then the derivative of the
function $T'_x(g,h):\mathbb{R}\times W_N(\pi(x))\rightarrow \text{Cliff}(W_N(\pi(x)))\widehat{\otimes}K$ in the direction of $Y$ exists, denoted by $D_Y(T'_x(g,h))$, and $\|D_Y(T'_x(g,h))\|\leq c$ for all $g,h\in \Gamma$.
\end{enumerate}

Since the $\Gamma$-orbits are uniformly coarsely equivalent, all the metric spaces $N_R(\Gamma\cdot x)$ with $x\in Y_k$ are uniformly coarsely equivalent to $|\Gamma|$. It follows that
$$\lim\limits_{R\rightarrow \infty}\Pi^u_{x\in Y_k}C^*_{alg}(N_R(\Gamma\cdot x), \mathcal {A}(H))_{O_x}^{\Gamma}\cong \Pi^u_{x\in Y_k}C^*_{alg}(|\Gamma|, \mathcal {A}(H))_{O_x}^{\Gamma}.$$
Moreover, since
$$\Pi^u_{x\in Y_k}C^*_{alg}(|\Gamma|, \mathcal {A}(H))_{O_x}^{\Gamma}= C^*_{alg}(|\Gamma|, \Pi_{x\in Y_k}\mathcal {A}(H)_{O_x})^{\Gamma},$$
we have that
$$C^*_{max}(P_{d}(X), \mathcal {A}(H))_O^{\Gamma}\cong  C^*_{max}(|\Gamma|, \Pi_{j\in J_k}\mathcal {A}(H)_{O_x})^{\Gamma}$$
and $$C^*(P_{d}(X), \mathcal {A}(H))_O^{\Gamma}\cong  C^*(|\Gamma|, \Pi_{x\in Y_k}\mathcal {A}(H)_{O_x})^{\Gamma}.$$

Since $\Gamma$ is amenable, there exists a sequence of finitely supported positive type functions $\phi_n:\Gamma\rightarrow \mathbb{C}$ such that $\phi_n$ converge pointwise to $1$(c.f. \cite[Theorem~2.6.8]{BO}). Denote $$k_n(g,h)=\phi_n(g^{-1}h)$$
for all $g,h\in \Gamma$. It is obvious that each $k_n$ is a positive definite kernel $k_n:\Gamma\times \Gamma \to [0,1]$ such that
$$k_n(\gamma g, \gamma h)=k_n(g,h),$$
for all $\gamma, g, h \in \Gamma$.
Therefore, one can define a sequence of unital and completely positive Schur multipliers $$M^{alg}_{k_n}:C^*_{alg}(|\Gamma|, \Pi_{x\in Y_k}\mathcal {A}(H)_{O_x})^{\Gamma}\rightarrow C^*_{alg}(|\Gamma|, \Pi_{x\in Y_k}\mathcal {A}(H)_{O_x})^{\Gamma}$$
by
$$M^{alg}_{k_n}(T)(g,h)=k_n(g,h)T(g,h)$$
for all $T\in C^*_{alg}(|\Gamma|, \Pi_{x\in Y_k}\mathcal {A}(H)_{O_x})^{\Gamma}$ and $g, h\in \Gamma$.
Each operator $M^{alg}_{k_n}$ continuously extends to completely positive maps $$M^{max}_{k_n}:C^*_{max}(|\Gamma|, \Pi_{x\in Y_k}\mathcal {A}(H)_{O_x})^{\Gamma}\rightarrow C^*_{max}(|\Gamma|, \Pi_{x\in Y_k}\mathcal {A}(H)_{O_x})^{\Gamma}$$
and
$$M_{k_n}:C^*(|\Gamma|, \Pi_{x\in Y_k}\mathcal {A}(H)_{O_x})^{\Gamma}\rightarrow C^*(|\Gamma|, \Pi_{x\in Y_k}\mathcal {A}(H)_{O_x})^{\Gamma}$$
By the definition of $k_n$, we know that $M^{max}_{k_n}$ and $M_{k_n}$ converge in point-norm to identity maps on $C_{max}^*(|\Gamma|, \Pi_{x\in Y_k}\mathcal {A}(H)_{O_x})^{\Gamma}$ and $C^*(|\Gamma|, \Pi_{x\in Y_k}\mathcal {A}(H)_{O_x})^{\Gamma}$, respectively.

Now we have the following commutative diagram:
$$\xymatrix{
  C^*_{max}(|\Gamma|, \Pi_{x\in Y_k}\mathcal {A}(H)_{O_x})^{\Gamma} \ar[d]_{\lambda} \ar[r]^{M^{max}_{k_n}} & C^*_{max}(|\Gamma|, \Pi_{x\in Y_k}\mathcal {A}(H)_{O_x})^{\Gamma} \ar[d]^{\lambda} \\
  C^*(|\Gamma|, \Pi_{x\in Y_k}\mathcal {A}(H)_{O_x})^{\Gamma} \ar[r]^{M_{k_n}} & C^*(|\Gamma|, \Pi_{x\in Y_k}\mathcal {A}(H)_{O_x})^{\Gamma} }.$$
For each operator $T\in \text{Ker}(\lambda)$, we have $$M_{k_n}(\lambda(T))=\lambda(M^{max}_{k_n}(T))=0.$$
Since $M^{max}_{k_n}(T)\in C^*_{alg}(|\Gamma|, \Pi_{x\in Y_k}\mathcal {A}(H))_{O_x}^{\Gamma}$ and the restriction of $\lambda$ on the $*$-subalgebra $C^*_{alg}(|\Gamma|, \Pi_{x\in Y_k}\mathcal {A}(H))_{O_x}^{\Gamma}$ is the identity map, we have that $M^{max}_{k_n}(T)=0$. It follows that
$$0=T=\lim\limits_{n \to \infty}M^{max}_{k_n}(T).$$
Therefore, $\lambda$ is injective. As a result, the map
$$\lambda:C_{max}^*(P_{d}(X),\mathcal {A}(H))_{O_{r,k}}^{\Gamma}\rightarrow C^*(P_{d}(X),\mathcal {A}(H))_{O_{r,k}}^{\Gamma}$$
is an isomorphism.

\end{proof}

We shall glue all the isomorphisms between ideals together using the following result.
\begin{lem}\label{idea-additive}
Let $A,A'$ be $C^*$-algebras, $\varphi:A\rightarrow A'$ a $*$-homomorphism. Assume $A=I+J$, $A'=I'+J'$, where $I,J$ are 2-sided ideals of $A$, and $I', J'$ are 2-sided ideals of $A'$. If the restriction maps $\varphi_I:I\rightarrow I'$, $\varphi_J:J\rightarrow J'$ and $\varphi_{I\cap J}: I\cap J\rightarrow I'\cap J'$ are all isomorphisms, then $\varphi:A\rightarrow A'$ is an isomorphism.
\end{lem}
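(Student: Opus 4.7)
The plan is to establish the result by a direct element-chasing argument; alternatively one can invoke the five lemma applied to a suitable pair of Mayer--Vietoris short exact sequences, but since everything reduces to very concrete bookkeeping of decompositions $a = x + y$ with $x \in I$, $y \in J$, a direct verification is cleanest and avoids setting up the categorical machinery.

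For injectivity, I would start with $a \in A$ such that $\varphi(a) = 0$, and write $a = x + y$ with $x \in I$ and $y \in J$ (possible since $A = I + J$). Applying $\varphi$ gives $\varphi_I(x) = -\varphi_J(y)$, and the left-hand side lies in $I'$ while the right-hand side lies in $J'$, so this common value lies in $I' \cap J'$. Using surjectivity of $\varphi_{I \cap J}$, choose $w \in I \cap J$ with $\varphi_{I \cap J}(w) = \varphi_I(x)$. Since $\varphi_I$ restricted to $I \cap J$ agrees with $\varphi_{I \cap J}$, injectivity of $\varphi_I$ forces $x = w \in I \cap J$, and symmetrically injectivity of $\varphi_J$ forces $y = -w$. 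Hence $a = w - w = 0$.

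For surjectivity, take any $a' \in A'$ and decompose $a' = x' + y'$ with $x' \in I'$ and $y' \in J'$; surjectivity of $\varphi_I$ and $\varphi_J$ yields lifts $x \in I$ and $y \in J$ with $\varphi(x + y) = \varphi_I(x) + \varphi_J(y) = x' + y' = a'$.

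The argument is essentially routine once one has identified the three commuting restriction maps; no analytic input is needed beyond the fact that $\varphi$ is a $*$-homomorphism and that the stated restrictions are well defined. The only subtle point — the one that makes the assumption on $\varphi_{I \cap J}$ genuinely essential — is the step in injectivity where one must recognize $\varphi_I(x) = -\varphi_J(y)$ as lying in $I' \cap J'$ and pull it back to $I \cap J$; without this pullback there would be no way to relate the $I$-component and the $J$-component of $a$ to each other. I therefore expect no obstacle, and this lemma will then be applied in the paper to assemble the local isomorphisms of Lemma \ref{iso-idea-max-red-twi} (for the ideals supported on each $O_{r,k}$, and their intersections) into the global isomorphism $\lambda$ on $C^*_{max}(P_d(X), \mathcal{A}(H))^\Gamma$.
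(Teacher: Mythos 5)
Your argument is correct, and it takes a somewhat different route from the paper's. The surjectivity step is immediate: decompose $a' = x' + y'$ and lift via $\varphi_I^{-1}$, $\varphi_J^{-1}$. The injectivity step is the only place where $\varphi_{I\cap J}$ is needed, and you use it exactly right: from $\varphi_I(x) = -\varphi_J(y) \in I' \cap J'$, surjectivity of $\varphi_{I\cap J}$ produces $w \in I\cap J$ with the same image, and since the restrictions of $\varphi$ to $I$, $J$, and $I\cap J$ are all literally restrictions of one map, injectivity of $\varphi_I$ gives $x = w$ and injectivity of $\varphi_J$ gives $y = -w$, hence $a = 0$. A bijective $*$-homomorphism of $C^*$-algebras is automatically an isomorphism, so you are done. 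The paper instead invokes the second isomorphism theorem to identify $A/I = (I+J)/I$ with $J/(I\cap J)$, deduces from the isomorphisms $\varphi_J$ and $\varphi_{I\cap J}$ that the induced map $A/I \to A'/I'$ is an isomorphism, and then applies the five lemma to the commuting diagram of short exact sequences $0 \to I \to A \to A/I \to 0$ and $0 \to I' \to A' \to A'/I' \to 0$. Your element-chase is more elementary and unpacks precisely the diagram chase that the five lemma encodes; the paper's version is slightly more compact once one grants those two standard facts. Either way the content is the same, and there is no gap.
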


\begin{proof}
Since $A/I=I+J/I=J/I\cap J$, so the quotient map $\varphi_{A/I}:A/I\rightarrow A'/I'$ is an isomorphism.
We have the following commutative diagram
$$
\xymatrix{
  0\ar[r] & I\ar[d]^{\varphi_I}\ar[r] & A\ar[d]^\varphi\ar[r] & A/I\ar[d]^{\varphi_{A/I}}\ar[r] & 0\\
  0\ar[r] & I'\ar[r] & A'\ar[r] & A'/I'\ar[r] & 0.}
$$
As a consequence of diagram-chasing, we obtain that $\varphi$ is an isomorphism.
\end{proof}

\begin{prop}\label{iso-max-red-twist}
Let $X$ be a discrete metric space with bounded geometry, and $\Gamma$ a countable discrete group. Assume $\Gamma$ acts on $X$ properly and isometrically with the $\Gamma$-orbits uniformly equivariantly coarsely equivalent. If the quotient space $X/\Gamma$ is coarsely embeddable into Hilbert space and $\Gamma$ is amenable, then the homomorphism
$$\lambda:C_{max}^*(P_{d}(X),\mathcal {A}(H))^{\Gamma}\rightarrow C^*(P_{d}(X),\mathcal {A}(H))^{\Gamma}$$
is an isomorphism.
\end{prop}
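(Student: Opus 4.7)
The strategy is to leverage the isomorphism of Lemma~\ref{iso-idea-max-red-twi} on each elementary piece $O_{r,k}$ and to glue these together using the Mayer--Vietoris identities of Lemma~\ref{algebra-m-v} and the ideal-gluing Lemma~\ref{idea-additive}, before passing to the inductive limit in $r$. Since
$$C^{*}_{max}(P_{d}(X),\mathcal{A}(H))^{\Gamma} = \lim_{r\to\infty} C^{*}_{max}(P_{d}(X),\mathcal{A}(H))_{O_{r}}^{\Gamma},$$
the analogous identity holds for the reduced algebra, and $\lambda$ respects both filtrations, it suffices to prove that the restriction
$$\lambda_{r}\colon C^{*}_{max}(P_{d}(X),\mathcal{A}(H))_{O_{r}}^{\Gamma} \longrightarrow C^{*}(P_{d}(X),\mathcal{A}(H))_{O_{r}}^{\Gamma}$$
is an isomorphism for every $r>0$.

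I would then proceed by induction on $\ell$ to show that $\lambda$ is an isomorphism on the subalgebras associated to the finite unions $U_{\ell}^{s} := O_{s,1}\cup\cdots\cup O_{s,\ell}$, working inside the direct limits $\lim_{s<r,\,s\to r}$ so that Lemma~\ref{algebra-m-v} applies directly. The base case $\ell=1$ is precisely Lemma~\ref{iso-idea-max-red-twi}. For the inductive step, Lemma~\ref{algebra-m-v} decomposes the $U_{\ell}^{s}$-algebra as the sum of its $U_{\ell-1}^{s}$-ideal and its $O_{s,\ell}$-ideal, with intersection identified with the algebra associated to $U_{\ell-1}^{s}\cap O_{s,\ell}$. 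The inductive hypothesis on $U_{\ell-1}^{s}$ and Lemma~\ref{iso-idea-max-red-twi} applied to $O_{s,\ell}$ show that $\lambda$ is an isomorphism on two of the three pieces, so Lemma~\ref{idea-additive} reduces the step to establishing the isomorphism on the intersection piece.

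The main technical obstacle is the intersection $U_{\ell-1}^{s}\cap O_{s,\ell} = \bigcup_{k<\ell}(O_{s,k}\cap O_{s,\ell})$. I would perform a secondary Mayer--Vietoris reduction to the elementary pairwise pieces $O_{s,k}\cap O_{s,\ell}$, each of which is a union of lens-shaped regions $B(\xi(\pi(x)),s)\cap B(\xi(\pi(y)),s)$ with $x\in Y_{k}$ and $y\in Y_{\ell}$. Although these lenses are no longer disjoint balls around a single $\Gamma$-domain, the bounded geometry of $X/\Gamma$ combined with $\xi$ being a coarse embedding forces the pairs $(x,y)$ with $\|\xi(\pi(x))-\xi(\pi(y))\|<2s$ to be uniformly close in $X/\Gamma$, giving a uniform bound on their local multiplicity. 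Partitioning such pairs into finitely many $\Gamma$-invariant subfamilies with mutually disjoint lenses reduces to finitely many pieces, each a disjoint union of pairwise intersections indexed by a $\Gamma$-domain; on every such piece, the Schur-multiplier argument of Lemma~\ref{iso-idea-max-red-twi}---which relies only on amenability of $\Gamma$, the uniform equivariant coarse equivalence of the orbits, and the identification of the resulting algebra with one of the form $C^{*}(|\Gamma|,\Pi_{x}\mathcal{A}(H)_{O_x})^{\Gamma}$---applies verbatim. Once the induction closes, taking $r\to\infty$ yields the proposition. The bulk of the work is thus the combinatorial bookkeeping of overlapping balls across the $k_{0}$ families; the analytic heart of the argument (the amenability-driven Schur-multiplier estimate) has already been done once and for all in Lemma~\ref{iso-idea-max-red-twi}.
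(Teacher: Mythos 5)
Your strategy---filter by $r$, decompose $O_r=\bigcup_{k=1}^{k_0}O_{r,k}$, apply Lemma~\ref{iso-idea-max-red-twi} on each piece, and glue via Lemmas~\ref{algebra-m-v} and~\ref{idea-additive}---is exactly what the paper does; its proof of Proposition~\ref{iso-max-red-twist} is simply ``by Lemma~\ref{algebra-m-v}, \ref{iso-idea-max-red-twi}, \ref{idea-additive}, the canonical quotient\dots is an isomorphism.'' So there is no divergence in approach. What you add, and what the paper leaves implicit, is the observation that Lemma~\ref{idea-additive} also requires the isomorphism on the intersection ideal $I\cap J$, which is not a direct instance of Lemma~\ref{iso-idea-max-red-twi} since $O_{r,k}\cap O_{r,\ell}$ is no longer a union of balls around a single $\Gamma$-domain. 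This is a genuine point that deserves to be addressed, and you are right to flag it.

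Your proposed resolution can, however, be streamlined. You suggest partitioning pairs $(x,y)$ with $\|\xi(\pi(x))-\xi(\pi(y))\|<2r$ into finitely many $\Gamma$-invariant subfamilies with disjoint lenses, but no such secondary partition is actually needed: for fixed $\ell$, the balls $O_y=B(\xi(\pi(y)),r)$ for $y\in Y_\ell$ are already mutually disjoint by the construction of the $\widetilde X_\ell$ (the $2r$-separation). Hence
$$O_{r,k}\cap O_{r,\ell}=\bigsqcup_{y\in Y_\ell}\bigl(O_y\cap O_{r,k}\bigr),$$
a disjoint union of open sets indexed by the $\Gamma$-domain $Y_\ell$, each contained in a single ball of radius $r$. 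This is precisely the structure exploited in Lemma~\ref{iso-idea-max-red-twi}: the only features used there are the disjointness over the orbit index set, the uniform equivariant coarse equivalence of the orbits, and amenability of $\Gamma$ (for the Schur multipliers); the sets need not be round balls. So the argument applies verbatim to the intersection, and the iterated Mayer--Vietoris over the finitely many $k$ closes the induction. A final minor caveat worth recording: Lemma~\ref{algebra-m-v} as stated gives the sum/intersection identities only for $C^*$ and $C_L^*$, so the maximal analogue must be invoked (it holds by the same proof); the paper uses this tacitly as well.
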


\begin{proof}
 Denote
 $$O_r=\bigcup\limits_{\pi(x)\in X/\Gamma}B(\pi(x),r).$$
 Note that
 $C_{max}^*(P_{d}(X),\mathcal {A}(H))_{O_r}^{\Gamma}$ and $C^*(P_{d}(X),\mathcal {A}(H))_{O_r}^{\Gamma}$ are 2-sided ideals of $C_{max}^*(P_{d}(X),\mathcal {A}(H))^{\Gamma}$ and $C^*(P_{d}(X),\mathcal {A}(H))^{\Gamma}$, respectively. Moreover, we have that
 $$C_{max}^*(P_{d}(X),\mathcal {A}(H))^{\Gamma}=\lim\limits_{r\rightarrow +\infty} C_{max}^*(P_{d}(X),\mathcal {A}(H))_{O_r}^{\Gamma},$$
 and
 $$C^*(P_{d}(X),\mathcal {A}(H))^{\Gamma}=\lim\limits_{r\rightarrow +\infty} C^*(P_{d}(X),\mathcal {A}(H))_{O_r}^{\Gamma}.$$

For each $r>0$, $O_r=\bigcup\limits_{k=1}^{k_0} O_{r,k}$. By Lemma \ref{algebra-m-v}, \ref{iso-idea-max-red-twi}, \ref{idea-additive}, we have that the canonical quotient
 $$\lambda: C_{max}^*(P_{d}(X),\mathcal {A}(H))_{O_r}^{\Gamma}\to C^*(P_{d}(X),\mathcal {A}(H))_{O_r}^{\Gamma},$$
 extended from the identity map on $C_{alg}^*(P_{d}(X),\mathcal {A}(H))_{O_r}^{\Gamma}$ is an isomorphism.
 As a result, the homomorphism
$$\lambda:C_{max}^*(P_{d}(X),\mathcal {A}(H))^{\Gamma}\rightarrow C^*(P_{d}(X),\mathcal {A}(H))^{\Gamma}$$
is an isomorphism.
\end{proof}

\section{Proof of the main results}

In this section, we shall show that the maps
$$e_*: \lim\limits_{d \to \infty}K_*(C_L(P_d(X))^{\Gamma}) \to K_*(C^*(X)^{\Gamma})$$
and
$$\lambda_*: K_*(C_{max}^*(X)^{\Gamma})\to K_*(C^*(X)^{\Gamma})$$
are isomorphisms under the assumptions in Theorem 1.1. In order to prove the main result, we shall introduce a geometric analogue of Bott periodicity following the constructions of Yu \cite{Yu}.

Let $H$ be a separable infinite-dimensional real Hilbert space. Let $V\subseteq H$ be a finite
dimensional affine subspace of $H$. Denote by $V^0$ the finite-dimensional linear
subspace consisting of differences of elements in $V$. Let $\mathcal {L}^2(V)=L^2(V,{\rm Cliff}(V^0))$
be the graded infinite dimensional Hilbert space of $L^2$-maps from $V$ to the complexified Clifford algebra of $V^0$.

Let $V_a\subseteq V_b$ be finite-dimensional affine subspaces of $H$. Then we have an algebraic decomposition
$$V_{b}=V_{ba}^0\oplus V_a,$$
where $V_{ba}^0$ is the orthogonal complement of $V_a^0$ in $V_b^0$. Consider a unit vector $\xi_0\in \mathcal {L}^2(V_{ba}^0)$ defined by
$$\xi_0(w)=\pi^{-\frac{\text{dim}(V_{ba})}{4}}\exp (-\frac{1}{2}\|w\|^2),$$
for all $w \in V_{ba}^0$.
Then we can regard $\mathcal {L}^2(V_a)$ as a subspace of $\mathcal {L}^2(V_b)$ via an isometric inclusion given by
\begin{equation}
\label{sec:5:eq:identity}
i_{ba}:\mathcal {L}^2(V_{a})\rightarrow \mathcal {L}^2(V_{ba}^0)\widehat{\otimes}\mathcal {L}^2(V_{a})\cong \mathcal {L}^2(V_{b}),\quad\xi\mapsto \xi_0\widehat{\otimes}\xi.
\end{equation}
For any finite-dimensional affine subspaces $V_a\subseteq V_b\subseteq V_c$, we have $$i_{ca}=i_{cb}\circ i_{ba}.$$
Then we define $$\mathcal {L}^2(H)=\lim\limits_{\rightarrow}\mathcal {L}^2(V),$$
where the limit is taken over all the finite-dimensional affine subspaces $V\subseteq H$.

Let $\mathscr{S}(V)\subseteq \mathcal {L}^2(H)$ be the subspace of Schwartz class functions from $V$ to Cliff$(V^0)$. Choose an orthonormal basis $\{e_1,e_2,\cdots, e_n\}$ for $V^0$, and let $\{x_1, x_2, \cdots, x_n\}$ be the dual coordinates to $\{e_1,e_2,\cdots, e_n\}$, we define the Dirac operator $D_V$, an unbounded operator on $\mathcal {L}^2(H)$ with domain $\mathscr{S}(V)$, by the formula$$D_V\xi=\sum\limits_{i=1}^n (-1)^{\text{deg} \xi}\frac{\partial \xi}{\partial x_i}e_i,$$
where $e_i$ is the Clifford multiplication by $e_i\in V^0\subseteq \text{Cliff}(V^0)$. Define the Clifford operator $C_{V,v}$ with domain $\mathscr{S}(V)$, by $$(C_{V,v}\xi)(w)=(w-v)\cdot \xi(w),$$ where $v\in V$ is a fixed base point, and the multiplication is the Clifford multiplication by the vector $w-v\in V^0$.

Now fix $x\in P_d(X)$. Let $W_k(\pi(x))$ be the finite-dimensional subspace as in Definition \ref{supp_a}, and identify
$\mathcal {L}^2(W_k(\pi(x)))$ as a subspace of $\mathcal {L}^2(W_{k+1}(\pi(x)))$ via the isometric inclusion
$$\mathcal {L}^2(W_k(\pi(x)))\rightarrow \mathcal {L}^2(W_{k+1}(\pi(x)))$$
defined in (\ref{sec:5:eq:identity}). Note that these inclusions preserve the Schwartz subspaces $\mathscr{S}(W_k(\pi(x)))$. Define a Schwartz subspace of $\mathcal {L}^2(H)$ by taking the algebraic direct limit
$$\mathscr{S}(\pi(x))=\lim\limits_{\longrightarrow}\mathscr{S}(W_k(\pi(x))).$$

Let $V_0(\pi(x))=W_1(\pi(x))$ and $V_k(\pi(x))=W_{k+1}(\pi(x))\ominus W_k(\pi(x))$ if $k\geq 1$, where $x\in P_d(X)$. We consider the Dirac operator $D_k$ defined by $$D_k=D_{V_k(\pi(x))}$$ and Clifford operators $C_{k,\pi(x)}$ defined by
$$C_{0,\pi(x)}=C_{V_0(\pi(x)),f(\pi(x))};\; C_{k,\pi(x)}=C_{V_k(\pi(x)),0},\; k\geq 1,$$
associated to each $V_k(\pi(x))$. For each $n\in \mathbb{N}$ and $t\geq1$, define an unbounded operator
 $B_{n,t}(\pi(x))$ on $\mathcal {L}^2(H)$ by $$B_{n,t}(\pi(x))=\sum\limits_{k=0}^{n-1}(1+kt^{-1})D_k+\sum\limits_{k=n}^{\infty}(1+kt^{-1})(D_k+C_{k,\pi(x)})$$
associated to the decomposition $$V_0(\pi(x))\oplus V_1(\pi(x))\oplus\cdots\oplus V_n(\pi(x))\oplus\cdots.$$
Note that the operator $B_{n,t}(\pi(x))$ is well-defined on the Schwartz space $\mathscr {S}(\pi(x))$.

Let $\mathcal{K}$ be the algebra of compact operators on $\mathcal{H}=\mathcal {L}^2(H)$.
We can define the $*$-algebra $C_{alg}^*(X, \mathcal {K})^\Gamma$ similarly as $C^*(X)^\Gamma$ by changing the coefficients of $T\in C_{alg}^*(X)^\Gamma$ from $K$ to $\mathcal{K}\widehat{\otimes}K$. Let $C_{max}^*(X, \mathcal {K})^\Gamma$ and $C_{red}^*(X, \mathcal {K})^\Gamma$ be the maximal and reduced Roe algebra which are the completions of $C_{alg}^*(X, \mathcal {K})^\Gamma$ under the maximal and reduced norm, respectively.

Recall that in Section \ref{Sect-def-CBC}, we choose a $\Gamma$-invariant countable dense subset $X_d\subset P_d(X)$ for each $d>0$.
\begin{defn}\label{def-K-Roe-alg}
Let $C_{alg}^*(P_d(X), \mathcal {S}\widehat{\otimes}\mathcal {K})^\Gamma$ be the set of all functions $T$ from $X_d\times X_d$ to $\mathcal {S}\widehat{\otimes}\mathcal {K}\widehat{\otimes} K$ such that
\begin{enumerate}
\item[(1)] there exists $M>0$ such that $\|T(x,y)\|\leq M$ for any $(x,y)\in X_d\times X_d$;
\item[(2)] there exists $r_1>0$ such that $T(x,y)=0$ if $d(x,y)>r_1$;
\item[(3)] there exists $L>0$ such that for each $y\in X_d$, $$\sharp\{x:T(x,y)\neq 0\}\leq L,\qquad \sharp\{x:T(y,x)\neq 0\}\leq L;$$
\item[(4)] for any bounded set $B\subseteq P_d(X)$, the set $\{(x,y)\in B\times B\cap X_d\times X_d|T(x,y)\neq 0\}$ is finite;
\item[(5)] there exists $r_2>0$, such that if $|t|>r_2$, then $T(x,y)(t)=0$ for all $(x,y)\in X\times X$, where $T(x,y)\in {S}\widehat{\otimes}\mathcal {K}\widehat{\otimes} K$ can be viewed as a $\mathcal {K}\widehat{\otimes} K$ valued function on $\mathbb{R}$;
\item[(6)] there exists $c>0$ such that $\|\frac{d}{dt}T(x,y)\|\leq c$ for all $(x,y)\in X\times X$;
\item[(7)] $\gamma(T)=T$, i.e. $T(\gamma^{-1}x, \gamma^{-1}y)=T(x,y)$.
\end{enumerate}
\end{defn}

Following Definition \ref{alg-twist}, we can view $C_{alg}^*(P_d(X), \mathcal {S}\widehat{\otimes}\mathcal {K})^\Gamma$ as a $*$-algebra acting on the graded Hilbert module
$$\mathcal{E}=\ell^2(X_d)\widehat{\otimes}\ell^2(\Gamma)\widehat{\otimes} \mathcal{S}\widehat{\otimes}\mathcal{K}\widehat{\otimes} K $$ over $\mathcal {S}\widehat{\otimes}\mathcal {K}\widehat{\otimes} K$, where the $\mathbb{Z}_2$-grading on $\ell^2(X_d)$, $\ell^2(\Gamma)$ and $K$ are trivial and $\mathcal{S}$ is graded by the odd and even functions.

\begin{defn}
\begin{enumerate}[(1)]
    \item The $C^*$-algebra $C^*(P_d(X), \mathcal {S}\widehat{\otimes}\mathcal {K})^\Gamma$ is defined to be the operator norm closure of $C_{alg}^*(P_d(X), \mathcal {S}\widehat{\otimes}\mathcal {K})^\Gamma$ in $B(\mathcal{E})$, where $B(\mathcal{E})$ is the $C^*$-algebra of all adjointable module homomorphisms from $\mathcal{E}$ to $\mathcal{E}$.
    \item The $C^*$-algebra $C_{max}^*(P_d(X), \mathcal {S}\widehat{\otimes}\mathcal {K})^\Gamma$ is the completion of $C_{alg}^*(P_d(X), \mathcal {S}\widehat{\otimes}\mathcal {K})^\Gamma$ with respect to the $C^*$-norm
$$\|T\|_{max}:=\sup\{\|\phi(T)\|,\phi:C_{alg}^*(P_d(X), \mathcal {S}\widehat{\otimes}\mathcal {K})^\Gamma\rightarrow B(\mathcal{E}_{\phi}),\ \text{a}\ \text{$\ast$-representation}\},$$
where $\mathcal{E}_\phi$ is a Hilbert module over the $C^*$-algebra $\mathcal {S}\widehat{\otimes}\mathcal {K}\widehat{\otimes} K$.
\end{enumerate}
\end{defn}
Naturally, we have the canonical quotient map
$$
\lambda: C^*_{max}(P_d(X), \mathcal{S}\widehat{\otimes} \mathcal{K}) \to C^*(P_d(X), \mathcal{S}\widehat{\otimes} \mathcal{K})
$$
extended from the identity map on the $*$-algebra $C^*_{alg}(P_d(X), \mathcal{S}\widehat{\otimes} \mathcal{K})$.

We can similarly define the localization algebra, denoted $C^*_{L,alg}(P_d(X), \mathcal{S}\widehat{\otimes}\mathcal{K})^{\Gamma}$, to be the $*$-algebra of all uniformly bounded and uniformly continuous functions $f: [0,\infty)\to C^*_{alg}(P_d(X),\mathcal{S}\widehat{\otimes} \mathcal{K})^{\Gamma}$ such that
\begin{enumerate}[(1)]
    \item ${\rm propagation}(f(t))\to 0$ as $t \to \infty$,
    \item the constants in condition $3$, $4$, $5$ and $6$ of Definition \ref{def-K-Roe-alg} are independent on $t \in [0,\infty)$.
\end{enumerate}

\begin{defn}
The localization algebra, denoted by $C^*_L(P_d(X), \mathcal{K})^{\Gamma}$, is the completion of the $*$-algebra $C^*_{alg}(P_d(X), \mathcal{K})^{\Gamma}$ under the norm $$\|f\|=\sup_{t\geq 0}\|f(t)\|.$$
\end{defn}

From the definition of $C_{alg}^*(P_d(X), \mathcal {S}\widehat{\otimes}\mathcal {K})^\Gamma$ and the nuclearity of $\mathcal{S}$, we have
$$C^*(P_d(X), \mathcal {S}\widehat{\otimes}\mathcal {K})^\Gamma\cong \mathcal {S}\widehat{\otimes} C^*(P_d(X), \mathcal {K})^\Gamma,$$
$$C_L^*(P_d(X), \mathcal {S}\widehat{\otimes}\mathcal {K})^\Gamma\cong \mathcal {S}\widehat{\otimes} C_L^*(P_d(X), \mathcal {K})^\Gamma,$$
and
$$C_{max}^*(P_d(X), \mathcal {S}\widehat{\otimes}\mathcal {K})^\Gamma\cong \mathcal {S}\widehat{\otimes} C_{max}^*(P_d(X), \mathcal {K})^\Gamma.$$

For every non-negative integer $n$ and $x\in X$, we define
$$(\theta_t^n(x)):\mathcal {A}(W_n(\pi(x)))\widehat{\otimes} K\rightarrow \mathcal {S}\widehat{\otimes}K(\mathcal {L}^2(H))\widehat{\otimes} K$$
by
$$
(\theta_t^n(x))(g\widehat{\otimes}h\widehat{\otimes} k)=g_t(X\widehat{\otimes}1+1\widehat{\otimes}B_{n,t}(\pi(x)))(1\widehat{\otimes}M_{h_t})\widehat{\otimes}k$$
for all
$g\in \mathcal {S}$, $h\in C_0(W_n(\pi(x)), \text{Cliff}(W_n(\pi(x))))$, $k\in K$, $x\in X_d$, where $g_t(s)=g(t^{-1}s)$
for $s\in \mathbb{R}$, $h_t(v)=h(\xi(\pi(x))+t^{-1}(v-\xi(\pi(x))))$ for $v\in W_n(\pi(x))$, $M_{h_t}$ is the
pointwise multiplication operator on $\mathcal {L}^2(V)$ (where $W_n(\pi(x))\subset V$ is an affine
subspace of $H$) via the formula $$(M_{h_t}\xi)(v+w)=h_t(v)\xi(v+w)$$ for all $\xi\in \mathcal {L}^2(V)$ and $v\in W_n(\pi(x))$, $w\in V\ominus W_n(\pi(x))$. For every $T\in C_{alg}^*(X, \mathcal {\mathcal {A}})^\Gamma$, let $n$ be a non-negative integer such that for every $(x,y)\in X_d\times X_d$, there exists $T_1(x,y)\in \mathcal {A}(W_n(\pi(x)))\widehat{\otimes} K$ satisfying
$\beta_n(x)(T_1(x,y))=T(x,y)$. We define
$$(\alpha_t(T))(x,y)=(\theta_t^n(x))(T_1(x,y))$$ for
every $T\in C_{alg}^*(P_d(X), \mathcal {A})^\Gamma$, by Lemma 5.8 in \cite{HKT}, we know that
$(\alpha_t(T))(x,y)\in K(\mathcal{L}^2(H))\widehat{\otimes}K$.

\begin{defn}\label{dirac-map}
For each $t\geq 1$, we define $$\alpha_t:C_{alg}^*(P_d(X), \mathcal {A})^\Gamma\rightarrow C_{alg}^*(P_d(X), \mathcal {S}\widehat{\otimes}\mathcal {K})^\Gamma$$
via the formula $$(\alpha_t(T))(x,y)=(\theta_t^n(x))(T_1(x,y))$$ for every $T\in C_{alg}^*(P_d(X), \mathcal {A})^\Gamma$, where
$n$ is a non-negative integer such that for every $(x,y)\in X_d\times X_d$, there exists $T_1(x,y)\in \mathcal {A}(W_n(\pi(x)))\widehat{\otimes} K$ satisfying  $\beta_n(\pi(x))(T_1(x,y))=T(x,y)$.
\end{defn}

By the similar arguments as in Lemma 7.2 in \cite{Yu} and Proposition \ref{iso-max-red-twist}, we have the asymptotic morphisms
$$\alpha_{max,t}:C_{max}^*(P_d(X), \mathcal {A})^\Gamma\rightarrow C_{max}^*(P_d(X), \mathcal {S}\widehat{\otimes}\mathcal {K})^\Gamma\cong\mathcal {S}\widehat{\otimes}C_{max}^*(P_d(X), \mathcal {K})^\Gamma,$$
and
$$\alpha_t:C^*(P_d(X), \mathcal {A})^\Gamma\rightarrow C^*(P_d(X), \mathcal {S}\widehat{\otimes}\mathcal {K})^\Gamma\cong\mathcal {S}\widehat{\otimes}C^*(P_d(X), \mathcal {K})^\Gamma$$
for all $t\geq 1$.
Then we have the induced homomorphisms
$$\alpha_{max,*}:K_*(C_{max}^*(P_d(X),\mathcal {A})^\Gamma)\rightarrow K_*(\mathcal {S}\widehat{\otimes}C_{max}^*(P_d(X), \mathcal {K})^\Gamma),$$
and
$$\alpha_*:K_*(C^*(P_d(X),\mathcal {A})^\Gamma)\rightarrow K_*(\mathcal {S}\widehat{\otimes}C^*(P_d(X), \mathcal {K})^\Gamma)$$
on $K$-theory.

Moreovere, we can define an asymptotic morphism
$$\alpha_{L,t}:C_L^*(P_d(X), \mathcal {A})^\Gamma\rightarrow \mathcal {S}\widehat{\otimes}C_L^*(P_d(X), \mathcal {K})^\Gamma$$
by
$$\alpha_{L,t}(T_s)(t)=\alpha_t(T_s)$$
for all path $(T_s)_{s \in [0,\infty)}\in C_L^*(P_d(X), \mathcal {A})^\Gamma$ and all $t\geq 1$.
In addition, we have the homomorphism
$$\alpha_{L,*}:K_*(C_L^*(P_d(X),\mathcal {A})^\Gamma)\rightarrow K_*(\mathcal {S}\widehat{\otimes}C_L^*(P_d(X), \mathcal {K})^\Gamma)$$
induced by the asymptotic morphism $(\beta_{L,t})_{t \in [0, \infty)}$.

\begin{defn}
For each $t\geq 1$, define a map
$$\beta_t:\mathcal {S}\widehat{\otimes}C_{alg}^*(P_d(X))^\Gamma\rightarrow C_{alg}^*(P_d(X),\mathcal {A})^\Gamma$$ by$$(\beta_t(g\widehat{\otimes }T))(x,y)=(\beta(\pi(x)))(g_t)\widehat{\otimes}T(x,y)$$
for all $g\in \mathcal {S}$, $T\in C_{alg}^*(X)^\Gamma$ and $x,y\in X$, where $g_t(s)=g(t^{-1}s)$ for all $s\in \mathbb{R}$, and $\beta(\pi(x)):\mathcal {S}=\mathcal {A}(f(\pi(x)))\rightarrow \mathcal {A}(H)$ is the $*$-homomorphism defined in Section 3.
\end{defn}

Following the arguments in Lemma 7.6 in \cite{Yu}, we know that $\beta_t$ extends to an asymptotic morphisms
$$\beta_{max, t}: \mathcal {S}\widehat{\otimes}C_{max}^*(P_d(X))^\Gamma \rightarrow C_{max}^*(P_d(X), \mathcal {A})^\Gamma$$
and
$$\beta_t: \mathcal {S}\widehat{\otimes}C^*(P_d(X))^\Gamma \rightarrow C^*(P_d(X), \mathcal {A})^\Gamma$$
for all $t\geq 0$.
Then we have the induced homomorphism
$$\beta_{max,*}:K_*(\mathcal {S}\widehat{\otimes}C_{max}^*(P_d(X))^\Gamma)\rightarrow K_*(C_{max}^*(P_d(X), \mathcal {A})^\Gamma)$$
and
$$\beta_*:K_*(\mathcal {S}\widehat{\otimes}C^*(P_d(X))^\Gamma)\rightarrow K_*(C^*(P_d(X), \mathcal {A})^\Gamma)$$
on $K$-theory.

Applying the map $\beta_t:\mathcal {S}\widehat{\otimes}C_{alg}^*(P_d(X))^\Gamma\rightarrow C_{alg}^*(P_d(X),\mathcal {A})^\Gamma$ point-wise gives rise to an asymptotic morphism $$\beta_{L, t}: \mathcal {S}\widehat{\otimes}C_L^*(P_d(X))^\Gamma \rightarrow C_L^*(P_d(X), \mathcal {A})^\Gamma$$
for all $t\geq 1$.
Then we have a homomorphism
$$\beta_{L,*}:K_*(\mathcal {S}\widehat{\otimes}C_L^*(P_d(X))^\Gamma)\rightarrow K_*(C_L^*(P_d(X), \mathcal {A})^\Gamma)$$
induced by the asymptotic morphism $(\beta_{L,t})_{t \in [0,\infty)}$ on $K$-theory. Now we are ready to prove the geometric analogue of Bott periodicity.
\begin{prop}\label{identity-bott-dirac}
The compositions
 $$\alpha_*\circ\beta_*: K_*(\mathcal{S}\widehat{\otimes}C^*(P_d(X))^{\Gamma}) \to K_*(C^*(P_d(X), \mathcal{S}\widehat{\otimes}\mathcal{K})^{\Gamma}),$$
 $$\alpha_{max,*}\circ \beta_{max,*}:K_*(\mathcal{S}\widehat{\otimes}C_{max}^*(P_d(X))^{\Gamma}) \to K_*(C_{max}^*(P_d(X), \mathcal{S}\widehat{\otimes}\mathcal{K})^{\Gamma})$$
 and
 $$\alpha_{L,*}\circ \beta_{L,*}:K_*(\mathcal{S}\widehat{\otimes} C_L^*(P_d(X))^{\Gamma}) \to K_*(C_L^*(P_d(X), \mathcal{S}\widehat{\otimes}\mathcal{K})^{\Gamma})$$
 are identity maps, respectively.
\end{prop}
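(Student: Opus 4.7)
The plan is to follow the geometric Bott periodicity argument of Yu \cite{Yu}, adapted to the equivariant setting, and verify that the same computation passes through to the maximal and localization variants.

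First, I would compute the composition on the algebraic level. For an elementary tensor $g \widehat{\otimes} T$ with $g \in \mathcal{S}$ and $T \in C^*_{alg}(P_d(X))^{\Gamma}$, unwinding the definitions gives
\[
(\alpha_t \circ \beta_t)(g \widehat{\otimes} T)(x,y) = \theta_t^0(x)\bigl(\beta(\pi(x))(g_t) \widehat{\otimes} T(x,y)\bigr),
\]
which, using the explicit form of $\theta_t^0(x)$ together with the functional calculus of $X \widehat{\otimes} 1 + 1 \widehat{\otimes} B_{0,t}(\pi(x))$ and the Gaussian multiplier centered at $\xi(\pi(x))$, reduces pointwise to an operator of the form $g(D_t(\pi(x))) \widehat{\otimes} T(x,y)$, where $D_t(\pi(x))$ is a rescaled Bott-Dirac operator. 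By standard properties of the harmonic oscillator, the kernel of $D_t$ is a one-dimensional spectral subspace and the resulting operator is asymptotically equivalent, as $t \to \infty$, to a rank-one Gaussian projection tensored with $g$.

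Next, I would invoke the rotation homotopy in $\mathbb{R} \oplus H$: for $\theta \in [0, \pi/2]$, the family of rotations interpolating between the axes yields a continuous path of asymptotic morphisms connecting $\alpha_t \circ \beta_t$ to the identity tensored with a canonical rank-one projection, identified with the identity under the standard $K$-theory isomorphism $K_*(\mathcal{S} \widehat{\otimes} A) \cong K_*(A)$. This is the standard mechanism by which the Bott-Dirac composition becomes homotopic to the identity; compare Lemma 7.6 and the surrounding computations in \cite{Yu}.

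For the maximal case, I would observe that both $\alpha_{max,t}$ and $\beta_{max,t}$ are defined as the extensions by continuity of the very same pointwise formulas to the maximal norm (using Proposition \ref{iso-max-red-twist} where needed to ensure these extensions exist). The rotation homotopy is implemented by pointwise formulas that preserve the algebraic structure of $C^*_{alg}(P_d(X), \mathcal{A})^{\Gamma}$ and so extend to a homotopy of asymptotic morphisms in the maximal setting. For the localization version, one applies $\alpha_{L,t}$ and $\beta_{L,t}$ pointwise in the localization parameter $s$, and checks that the propagation of each intermediate operator in the rotation homotopy tends to zero as $s \to \infty$, so the homotopy stays inside $C_L^*(P_d(X), \mathcal{S} \widehat{\otimes} \mathcal{K})^{\Gamma}$.

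The main obstacle I anticipate is the equivariance bookkeeping. Since the basepoint $\xi(\pi(x))$ varies with the orbit, the Bott-Dirac operator $B_{0,t}(\pi(x))$ and the associated rotation must be organized so that $\Gamma$-invariance of the resulting operators is preserved at every stage of the homotopy, with constants (propagation, derivative bound, support size in $\mathbb{R}_+ \times H$) uniform in $x \in X_d$. The hypothesis that the $\Gamma$-orbits are uniformly equivariantly coarsely equivalent is precisely what ensures these uniformity estimates hold, so that the rotation homotopy lives in the correct equivariant twisted algebra throughout. Once this is in place, the three statements follow simultaneously from the single pointwise computation.
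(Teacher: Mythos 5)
Your proposal is correct and takes essentially the same approach as the paper: compute $\alpha_t\circ\beta_t$ at the algebraic level, apply a rotation homotopy in $\mathbb{R}\oplus H$ to remove the $\xi(\pi(x))$-translation in the Bott--Dirac operator, further homotope (via rescaling $s^{-1}B_{0,t}$) to the rank-one kernel projection, and carry the same pointwise formulas through the maximal and localization variants. The paper organizes this through two auxiliary asymptotic morphisms $\gamma$ and $\gamma'$ (based on $B_{0,t}$ and $B_{1,t}$, related by \cite[Prop.~4.2]{HKT}) and a two-stage homotopy, and the uniformity estimates you need actually come from the bounded-geometry conditions built into the definition of $C^*_{alg}$ rather than from the uniform coarse equivalence of orbits, but these are minor organizational differences from the same underlying argument.
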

\begin{proof}
We shall only prove the first composition $\alpha_*\circ\beta_*$ is an identity map, and the other two can be proved similarly.

Define the asymptotic morphism $$\gamma:C_c(\mathbb{R})\widehat{\otimes}C_{alg}^*(P_d(X))^\Gamma\rightarrow C_{alg}^*(P_d(X),\mathcal{S}\widehat{\otimes}\mathcal {K})^\Gamma\cong C_c(\mathbb{R})\widehat{\otimes}C_{alg}^*(P_d(X), \mathcal{K})^\Gamma$$ by
$$\gamma_t(g\widehat{\otimes}T)(x,y)=g_{t^2}(X\widehat{\otimes}1+1\widehat{\otimes}B_{0,t}(\pi(x)))\widehat{\otimes}T(x,y),$$
where $g\in C_c(\mathbb{R})$ has compact support and continuously differentiable, $T\in C_{alg}^*(P_d(X))^\Gamma$, $t\geq 1$.

For any $x\in X$, we define $$\eta:\mathcal{A}(W_1(\pi(x)))\rightarrow \mathcal{K}(\mathcal{H})\widehat{\otimes} K$$
by $$(\eta_t(x))((g\widehat{\otimes}h)\widehat{\otimes}k)=g_{t^2}(X\widehat{\otimes}1+1\widehat{\otimes}B_{1,t}(\pi(x)))(1\widehat{\otimes}M_{h_{t^2}})
\widehat{\otimes}k$$ for each $g\in \mathcal{S}$, $h\in \text{Cliff}(W_1(\pi(x)))$, $k\in K$ and $t\geq 0$. Similarly, we have an asymptotic morphism
$$\gamma':C_c(\mathbb{R})\widehat{\otimes}C_{alg}^*(P_d(X))^\Gamma\rightarrow C_{alg}^*(P_d(X),\mathcal{S}\widehat{\otimes}\mathcal {K})^\Gamma\cong C_c(\mathbb{R})\widehat{\otimes}C_{alg}^*(P_d(X), \mathcal{K})^\Gamma$$ by
$$(\gamma'_t(g\widehat{\otimes}T))(x,y)=(\eta_t(x))(\beta_{W_1(\pi(x))}(g)\widehat{\otimes}T(x,y)),$$
where $g\in C_c(\mathbb{R})$ has compact support and continuously differentiable, $T\in C_{alg}^*(P_d(X))^\Gamma$, $t\geq 1$ and $\beta_{W_1(\pi(x))}:\mathcal{S}=\mathcal{A}(\pi(x))\rightarrow \mathcal{A}(W_1(\pi(x)))$ is the homomorphism induced by the map $g\mapsto g(X\widehat{\otimes} 1+1\widehat{\otimes} C_{W_1(\pi(x)),\pi(x)})$.

With the similar proof of Lemma 7.6 in \cite{Yu}, we have $$\gamma_t(g\widehat{\otimes}T)\leq \|g\|\|T\|, \gamma'_t(g\widehat{\otimes}T)\leq \|g\|\|T\|$$
for $g\in C_c(\mathbb(R))$ and $T\in C_{alg}^*(P_d(X))^\Gamma$, so $\gamma$ and $\gamma'$ can be extended to the asymptotic morphism $\gamma$ and $\gamma'$ from $\mathcal{S}\widehat{\otimes}C^*(P_d(X))^\Gamma$ to $ \mathcal{S}\widehat{\otimes}C^*(P_d(X), \mathcal{K})^\Gamma$ respectively. By the proof of Proposition 4.2 in \cite{HKT}, we know that $\gamma$ is asymptotically equivalent to $\gamma'$. Hence we have  $\gamma_*=\gamma'_*$ at the $K$-theory level.

Recall that $\mathcal{H}=\mathcal {L}^2(H)=\lim\limits_{\rightarrow}\mathcal {L}^2(V)$, where the limit is taken over all the finite-dimensional affine subspaces $V\subseteq H$. For any $x\in X$ and $t\geq 1$, let $U_x(t)$ be the unitary operator on $\mathcal{H}$ induced by the translation
$$v\mapsto v-t\xi(\pi(x))$$
on $V$. Let
$$
R(s)=\left(
  \begin{array}{cc}
    \cos\frac{\pi s}{2} & \sin\frac{\pi s}{2} \\
    -\sin\frac{\pi s}{2} & \cos\frac{\pi s}{2} \\
  \end{array}
\right), s\in [0,1].
$$
For each $x\in X$, $s\in [0,1]$ and $t\geq 1$, define a unitary operator $U_{x,s}(t)$ on $(l^2(X)\widehat{\otimes} \mathcal{H} \widehat{\otimes} H)\oplus(l^2(X)\widehat{\otimes} \mathcal{H} \widehat{\otimes} H)$ by
$$
U_{x,s}(t)=R(s)\left(
  \begin{array}{cc}
   1\widehat{\otimes}U_x(t)\widehat{\otimes}1  & 0 \\
    0 & 1 \\
  \end{array}
\right)R^{-1}(s).
$$
Now we define a family of asymptotic morphisms $$\gamma(s): C_c(\mathbb{R})\widehat{\otimes} C^*_{alg}(P_d(X))^\Gamma\longrightarrow  C_c(\mathbb{R})\widehat{\otimes}C_{alg}^*(P_d(X), \mathcal{K})^\Gamma\widehat{\otimes} M_2(\mathbb{C})$$
by $$\gamma_t(s)(g\widehat{\otimes}T)(x,y)=U_{x,s}(t)\left(
                                                       \begin{array}{cc}
                                                         (\gamma'_t(g\widehat{\otimes}T))(x,y) & 0 \\
                                                         0 & 0 \\
                                                       \end{array}
                                                     \right)U^{-1}_{x,s}(t)
$$
for $g\in C_c(\mathbb{R})$ has compact support and continuously differentiable, $T\in C_{alg}^*(P_d(X))^\Gamma$, $t\geq 1$, $s\in [0,1]$ and $(x,y)\in X_d\times X_d$.

Similarly, $\gamma(s)$ can be extended to the asymptotic morphisms $\gamma(s)$ for the reduced extensions.
And from the definition, it is easy to know that $\gamma(s)$ is a homotopy of asymptotic morphisms.

Let $g\in C_c(\mathbb{R})$ be a continuously differentiable function with compact support. For any $T\in C_{alg}^*(P_d(X))^\Gamma$, and $t\geq 1$, we have that
$$\gamma_t(0)(g\widehat{\otimes}T)-\left(
                                     \begin{array}{cc}
                                       \alpha_t(\beta_t(g\widehat{\otimes}T)) & 0 \\
                                       0 & 0 \\
                                     \end{array}
                                   \right)\rightarrow 0.
$$
As a result, we have
$$\gamma(0)_*=\alpha_*\circ \beta_*$$ at the $K$-theory level. On the other hand, $$
\gamma(1)=
\left(
                                                 \begin{array}{cc}
                                                   \gamma'_t & 0 \\
                                                   0 & 0 \\
                                                 \end{array}
                                               \right),
$$
so $\gamma(1)_*=\gamma'_*$. It follows that $\alpha_*\circ \beta_*=\gamma_*$.

Replacing $B_{0,t}(\pi(x))$ with $s^{-1}B_{0,t}(\pi(x))$ in the definition of $\gamma$ for all $s\in (0,1]$, we obtain a homotopy between $\gamma$ and the homomorphism induced by $g\widehat{\otimes}T\mapsto g\widehat{\otimes}(P\widehat{\otimes}T)$, where $P$ is a projection onto the one-dimensional kernel of $B_{0,t}(\pi(x))$ which does not depend on $x$. It turns out that $\gamma_*$ is an identity, so $\alpha_*\circ \beta_*$ is the identity.

\end{proof}
\par
Finally, we are ready to complete the proofs of the main results in this paper.
\par
\begin{proof}[{\bf Proof of Theorem \ref{main-thm}}]
We have the following commutative diagram
$$\xymatrix{
  \lim\limits_{d\rightarrow \infty}K_*(\mathcal {S}\widehat{\otimes}C_L^*(P_d(X))^{\Gamma}) \ar[d]_{\beta_{L,*}} \ar[r]^{e_*} & \lim\limits_{d\rightarrow \infty}K_*(\mathcal {S}\widehat{\otimes}C^*(P_d(X))^{\Gamma}) \ar[d]^{\beta_*} \\
  \lim\limits_{d\rightarrow \infty}K_*(C_L^*(P_d(X),\mathcal {A}(H))^{\Gamma}) \ar[d]_{\alpha_{L,*}} \ar[r]_{\cong}^{e_*} & \lim\limits_{d\rightarrow \infty}K_*(C^*(P_d(X),\mathcal {A}(H))^{\Gamma}) \ar[d]^{\alpha_*} \\
  \lim\limits_{d\rightarrow \infty}K_*(\mathcal {S}\widehat{\otimes}C_L^*(P_d(X))^{\Gamma})\ar[r]^{e_*} & \lim\limits_{d\rightarrow \infty}K_*(\mathcal {S}\widehat{\otimes}C^*(P_d(X))^{\Gamma}).   }$$

By Proposition \ref{identity-bott-dirac}, we know that $\alpha_*\circ\beta_*$ and $(\alpha_L)_*\circ(\beta_L)_*$ are identity maps, respectively.
%So with proposition \ref{twisted-cbc} and a
It follows immediately from the diagram chasing that the map
$$e_*:\lim\limits_{d\rightarrow \infty }K_*(C_L^*(P_d(X))^{\Gamma})\rightarrow \lim\limits_{d\rightarrow \infty }K_*(C^*(P_d(X))^\Gamma)$$
induced by evaluation-at-zero map between the localization algebra and Roe algebra is an isomorphism on $K$-theory.
\end{proof}

\begin{proof}[{\bf Proof of Theorem \ref{main-thm-2}}]
For any $d>0$, we have the following commutative diagram
$$\xymatrix{
  K_*(\mathcal {S}\widehat{\otimes}C_{max}^*(P_d(X))^{\Gamma}) \ar[d]_{\beta_{max,*}} \ar[r]^{\lambda_*} & K_*(\mathcal {S}\widehat{\otimes}C^*(P_d(X))^{\Gamma}) \ar[d]^{\beta_*} \\
  K_*(C_{max}^*(P_d(X),\mathcal {A}(H))^{\Gamma}) \ar[d]_{\alpha_{max,*}} \ar[r]_{\cong}^{\lambda_*} & K_*(C^*(P_d(X),\mathcal {A}(H))^{\Gamma}) \ar[d]^{\alpha_*} \\
  K_*(\mathcal {S}\widehat{\otimes}C_{max}^*(P_d(X))^{\Gamma})\ar[r]^{\lambda_*} & K_*(\mathcal {S}\widehat{\otimes}C^*(P_d(X))^{\Gamma}).   }$$

By Proposition \ref{identity-bott-dirac}, we know that $\alpha_{max,*}\circ \beta_{max,*}$ and $\alpha_*\circ\beta_*$ are identity maps, respectively. By Theorem \ref{iso-max-red-twist} and the diagram chasing, we have that the map
$$\lambda_*:K_*(C_{max}^*(P_d(X))^{\Gamma})\rightarrow K_*(C^*(P_d(X))^{\Gamma})$$
induced by the canonical quotient is an isomorphism on $K$-theory. Consequently, we have that $$\lambda_*:K_*(C_{max}^*(X)^{\Gamma})\rightarrow K_*(C^*(X)^{\Gamma})$$ is an isomorphism since $P_d(X)$ is $\Gamma$-equivariantly coarsely equivalent to $X$ for a suitably large $d$.
\end{proof}

\par\noindent{\bf Acknowledgement.} The authors would like to thank Professor Guoliang Yu for helpful conversations. The first author thanks Professor Matthew Kennedy for his support.

\bigskip

\bigskip

\footnotesize

\noindent  Jintao Deng \\
Department of Mathematics,\\
University of Waterloo, Waterloo N2L3G1, Canada\\
E-mail: \url{jintao.deng@uwaterloo.ca}\\

\noindent  Benyin Fu \\
College of Statistics and Mathematics,\\
Shanghai Lixin University of Accounting and Finance,\\
Shanghai 201209, P. R. China.\\
E-mail: \url{fuby@lixin.edu.cn}\\

\noindent Qin Wang\\
Research Center for Operator Algebras, and Shanghai Key Laboratory of Pure Mathematics and Mathematical Practice, School of Mathematical Sciences,  East China Normal University,\\
Shanghai 200241, P. R. China.\\
Email: qwang@math.ecnu.edu.cn\\

\end{document}